\documentclass[12pt]{article}

\usepackage[margin=3cm]{geometry}

\usepackage{amsmath,amsfonts,amssymb, amsthm}

\newcommand\N{\mathbb{N}}
\newcommand\R{\mathbb{R}}
\newcommand\G{\mathcal{G}}
\newcommand\F{\mathcal{F}}
\newcommand\src{\mathsf{s}}
\newcommand\tgt{\mathsf{t}}
\newcommand\mul{\mathsf{m}}
\newcommand\inv{\mathsf{i}}
\newcommand\one{\mathsf{u}}
\newcommand\mset[1]{{#1}^{(2)}}
\newcommand\Lie{\mathrm{Lie}}
\newcommand\anch{\mathsf{a}}

\newcommand\cA{\mathcal{A}}

\newcommand\pr{\operatorname{pr}}
\newcommand\id{\operatorname{id}}

\newcommand\Fl{\operatorname{Fl}}
\newcommand\shcinf{\mathscr{C}^\infty}

\newcommand\SInvMan{\mathrm{SInvMan}}
\newcommand\SInvVBun{\mathrm{SInvVBun}}
\newcommand\SInvLieGpd{\mathrm{SInvLieGpd}}
\newcommand\SInvLieAlg{\mathrm{SInvLieAlg}}
\newcommand\Man{\mathrm{Man}}

\newcommand\wt[1]{\widetilde{#1}}

\newcommand\sis{s.i.s.\ }

\newcommand\strong{strong}
\newcommand\epsij{\epsilon_{ij}}
\newcommand\vpjl{\varprojlim}

\usepackage{enumitem}
\setlist[enumerate]{topsep=2pt,
itemsep=1pt,parsep=0pt,
partopsep=0pt}

\theoremstyle{plain}
\newtheorem{theorem}{Theorem}[section]     
\newtheorem{lemma}[theorem]{Lemma}         
\newtheorem{proposition}[theorem]{Proposition}
\newtheorem{corollary}[theorem]{Corollary}

\theoremstyle{definition}
\newtheorem{definition}[theorem]{Definition}
\newtheorem{example}[theorem]{Example}

\theoremstyle{remark}
\newtheorem{remark}[theorem]{Remark}

\usepackage{titlesec}
\titleformat{\section}
  {\normalfont\large\bfseries}{\thesection}{1em}{}
\titleformat{\subsection}
  {\normalfont\normalsize\bfseries}{\thesubsection}{1em}{}
\titleformat{\subsubsection}
  {\normalfont\normalsize\itshape}{\thesubsubsection}{1em}{}

\usepackage{xcolor}
\usepackage{hyperref}
\hypersetup{
    colorlinks=true,
    linkcolor=blue,      
    citecolor=blue,      
    urlcolor=blue,       
    filecolor=blue       
}

\usepackage{mathrsfs}
\newcommand\shA{\mathscr{A}}

\usepackage{xurl}

\newcommand\Emph[1]{\emph{\textbf
{#1}}}

\title{Inverse limits of locally convex Lie groupoids}
\author{A. G. Shaltut\thanks{ahmed.shaltut@queensu.ca} }
\date{}

\begin{document}

\maketitle

\begin{abstract}
  We study strict inverse systems of locally convex Lie groupoids and Lie algebroids. We show that strict inverse limits of Lie algebroids and Lie groupoids inherit natural Lie structures and establish a natural isomorphism 
  of functors $\vpjl \circ \Lie \cong \Lie \circ \vpjl$.
  We then consider inverse systems of integrable Banach-Lie algebroids and their source-simply connected levelwise integrations. We give conditions under which these integrations form a strict inverse system of Lie groupoids and hence their inverse limit integrates the inverse-limit algebroid.
  Under additional hypotheses on the source-fiber inverse sequence, we describe the obstruction to source-simply connectedness of the resulting integration in terms of the derived inverse limit
$\vpjl\nolimits^{1}\pi_2$
of the second homotopy groups of the source fibers.
  We illustrate the theory with several examples: 
  inverse limits of jet
groupoids, current groupoids, and gauge groupoids, including the
corresponding inverse-limit Atiyah algebroids and a source-simply
connected inverse-limit integration arising from the quaternionic Hopf
bundle.
\end{abstract}

\section{Introduction}

Inverse  limits of geometric structures have been studied extensively in the literature.  For systematic treatments, we refer to
\cite{DGV16,CP23} and the references within.  In particular, inverse limit methods have been
developed for Fr\'echet manifolds, vector bundles, principal bundles, Lie groups, and Lie algebroids. In particular,
Cabau studied strong
inverse limits of Banach Lie algebroids \cite{Cab12},
while Pelletier
studied integrability of inverse limits of Banach distributions 
including applications to inverse sequences of Banach Lie algebroids
\cite{Pel22}.
In this paper, we work with
inverse systems indexed by an arbitrary directed set and do not require the
manifold topology on the inverse limit to coincide with the ordinary
inverse limit topology. We call these \emph{strict inverse
systems}.
We
call a strict system \emph{strong} when the two topologies agree.  Thus the
strong case is contained in the strict one, and in the strong case several
continuity requirements for compatible limit maps become automatic.  
The distinction is not merely formal. Indeed,
the dyadic solenoid is not a locally convex
manifold in its inverse limit topology,
but it admits a strict smooth atlas with a
strictly finer topology; see Section~\ref{sec:mfds_bundles} below.

We adopt the Lie algebroid notion and the dg-map Lie algebroid morphism notion in~\cite{Sha26}. 
A strict inverse system of Lie algebroids has a
canonical inverse-limit Lie algebroid, characterized by the property that
the canonical limit maps are Lie algebroid morphisms.
We employ a notion of strictness for inverse systems of Lie groupoids
that guarantees a canonical Lie groupoid structure on the inverse limit.
The Lie construction is compatible with these limits. Indeed, we show that there is a natural
isomorphism
$\Lie(\vpjl \G_i) \cong \vpjl\Lie(\G_i)$.
We next consider inverse systems of integrable Banach-Lie algebroids. Using source-simply connected integrations at each level and Lie II, we first obtain an inverse system of Banach-Lie groupoids integrating the given algebroid system. The existence of an inverse system of groupoids, however, does not by itself provide a strict inverse-limit Lie groupoid structure. We construct compatible local source charts from compatible algebroid trivializations and formulate a criterion ensuring that the levelwise integrations form a strict inverse system: the resulting compatible arrow-chart families are required to form a strict atlas, and the inverse-limit multiplication and inversion are required to be continuous with respect to the resulting atlas topology. Under these hypotheses, the inverse-limit groupoid integrates the inverse-limit algebroid.
Source-simply connectedness of this inverse-limit integration is a separate topological issue. For inverse sequences satisfying suitable local-triviality hypotheses on the source-fiber connecting maps, together with the required identification of the limit source fiber with the topological inverse limit of the level source fibers, we identify the obstruction in terms of the derived inverse limit
$\vpjl^{1}\pi_2(\G_{n,x_n},1_{x_n}).$
In particular, a Mittag-Leffler condition on the inverse sequence of second homotopy groups implies that the inverse-limit integration is source-simply connected.

Throughout, $I$ denotes a directed set.  An inverse system
is a contravariant functor from $I$ to the category under consideration. We write this as a pair
$(X_i,p_{ij})$.  If the connecting morphisms are understood, we
denote the inverse system by $\{X_i\}$.
We denote the limit maps by $p_j\colon \vpjl X_i\to X_j$. We also use this limit map notation for manifolds, bundles, and groupoids.
If $I=\N$, we interchangeably call the inverse system an inverse sequence.
All locally convex spaces are real and Hausdorff. 
Let $(E_i,\epsilon_{ij})$ be an inverse system of locally convex
spaces and let $(F_i, \epsij|_{F_j})$ be an inverse system of complemented 
subspaces, that is,
$F_i$ is a complemented subspace of $E_i$ and for all $i\leq j$, $\epsij(F_j)\subseteq F_i$. We say that 
$\{F_i\}$ is \Emph{complemented} (inverse system)
if there exists a family of continuous projections $\pi_i\colon E_i\to E_i$ with $\pi_i(E_i) = F_i$ such that 
$\pi_i\circ \epsij = \epsij\circ \pi_j$ whenever $i\leq j$. In turn,
$\vpjl F_i$ is complemented in $\vpjl E_i$ with $\vpjl \pi_i$
as the continuous projection. Equivalently, there exists an inverse system $(C_i,\nu_{ij})$ and compatible topological isomorphisms
$\theta_i\colon E_i\to F_i\times C_i$ such that 
$\theta_i(w) = (w,0)$ for all $w\in F_i$ and
$(\epsij|_{F_j}\times \nu_{ij})\circ \theta_j = \theta_i\circ \epsij$.
That is, $\vpjl \theta_i$ is a topological isomorphism.

When $I=\mathbb N$ and the model spaces are Banach spaces, their
inverse limit is a Fr\'echet space.  Consequently, the corresponding
strict inverse-limit manifolds are Fr\'echet manifolds. In the strong
case their topology is precisely the usual inverse-limit topology.

We use Bastiani
calculus; see \cite{GN26} for background on locally convex calculus,
manifolds, and Lie groups. For Banach spaces, Bastiani smoothness agrees with the usual
smoothness defined in terms of Fr\'echet derivatives.
If $E,F$ are locally convex spaces,
$U\subseteq E$ is open, and $f\colon U\to F$ is continuous, then $f$
is of class $C^1$ if
$df(x)v := \lim_{t\to0} t^{-1}({f(x+tv)-f(x)})$
exists for every $x\in U$ and $v\in E$, and defines a continuous map
$df\colon U\times E\to F$, $(x,v)\mapsto df(x)v$.  
Inductively, $f$ is $C^k$ if $df$ is $C^{k-1}$,
and $f$ is smooth if it is $C^k$ for every $k$.

Throughout $k\in \N\cup\{\infty\}$.
For $C^k$-manifolds, we only require the existence of a 
maximal atlas on a topological space where the modeling spaces
for the charts are locally convex spaces
and the overlap maps are $C^k$ in the Bastiani sense.
We don't require second-countability or paracompactness. In Section~\ref{sec:examples}, finite-dimensional manifolds are assumed Hausdorff and paracompact.
The topology is not required to be Hausdorff for the arrow manifold of Lie groupoids. In fact, most of the constructions, we give here work for non-Hausdorff manifolds. For the manifold structure on inverse limits, we start with the inverse limit 
as a set and use the atlas topology (see below).

Suppose that $M$ is a $C^k$-manifold. 
A subset $S \subseteq M$ is a \Emph{submanifold} if it is locally modeled 
on a closed subspace of the modeling space. That is, for every $x \in S$, there is a 
chart $\phi\colon U\to M_{\phi}$ about $x$ such that $\phi(U \cap S) = \phi(U) \cap S_\phi$ 
for a closed subspace $S_\phi \subseteq M_\phi$. If each 
$S_\phi$ is complemented in $M_\phi$, then $S$ is a 
\Emph{split submanifold}.
A $C^k$-map $f \colon M \to N$ is a \Emph{submersion} if, 
for every $x \in M$, there exist charts $\phi\colon U\to M_{\phi}$ about $x$ and $\psi\colon V\to N_{\psi}$ about 
$f(x)$ such that 
$M_{\phi}=N_{\psi}\times C$ for a locally convex space $C$, 
$f(U) \subseteq V$, and $\psi \circ f \circ \phi^{-1}=\pr_1|_{\phi(U)}$ 
for $\pr_1 \colon  N_\psi \times 
C \to N_\psi$. 
On the other hand, $f$ is an \Emph{immersion}  if instead
$N_\psi = M_\phi\times C$ and 
$\psi \circ f \circ \phi^{-1} = 
\iota|_{\phi(U)}$, where $\iota \colon M_\phi \to M_{\phi}\times C$
is the canonical inclusion.
In 
particular, an immersion restricts to an embedding locally about each point. 
An immersion that is also a topological embedding is called an 
\Emph{embedding}.

A groupoid $\G\rightrightarrows M$ with structure maps
$\src$, $\tgt$, $\mul$, $\one$, and $\inv$ is a
\Emph{(locally convex) Lie groupoid} if $M$ and $\G$ are smooth
manifolds, $\src$ is a submersion, $\inv$ is smooth, and
$\mul\colon\mset{\G}\to\G$ is smooth, where
$\mset{\G}:=(\src\times\tgt)^{-1}(\Delta_M)$ is endowed with its
split submanifold structure.
Here $\inv^2=\id_{\G}$, hence $\inv$ is a
diffeomorphism. In turn, $\tgt=\src\circ\inv$ is also a submersion.
Moreover, the unit map is automatically a smooth embedding.
Indeed, if $x\in M$, then a local smooth section
$\sigma\colon U\to\G$ of $\src$ through $1_x$. On $U$ we have
$\one=\mul\circ(\inv\circ\sigma,\sigma)$.
Thus $\one$ is smooth. Since $\src\circ\one=\id_M$, the unit map is
a smooth section of a submersion and hence an embedding.

Let $E$ be a locally convex space. The canonical projection
$\pr_1 \colon M \times E \to M$ defines the trivial vector bundle structure on
$M\times E$ with fiber $E$. Let $N \times F$ be another 
trivial $C^k$-vector bundle. A \Emph{vector bundle morphism} is a 
$C^k$-map $\Psi \colon M \times E \to N \times F$ which is 
given in terms of a pair of $C^k$-maps $f \colon M \to N$ and 
$K\colon M \times E \to F$ such that 
$\Psi(x, u) = (f(x), K(x, u))$ and $K$ is linear in the second variable.
Let $\Psi_i \colon M_i \times E_i \to M_{i+1} \times E_{i+1}$, 
$i = 1, 2$, be $C^k$-vector bundle morphisms. The composition is given by
\[
(\Psi_2 \circ \Psi_1)(x, u) 
= (f_2(f_1(x)), K_2(f_1(x), K_1(x, u)))
\]
which is a vector bundle morphism from $M_1\times E_1$ to $M_3\times E_3$.
We say that $\Psi$ is a \Emph{vector bundle isomorphism} if there exists a $C^k$
vector bundle morphism $\Psi^{-1} \colon N \times F \to 
M \times E$ which is inverse for $\Psi$.
In particular, 
$f$ is a $C^k$-diffeomorphism and $K(x,\cdot)$ is linear isomorphism 
for all $x\in M$.
We can write $\Psi^{-1}$  as $\Psi^{-1}(y, w) = (f^{-1}(y), K(f^{-1}(y),\cdot)^{-1} w) $.

A \Emph{$C^k$-vector bundle} is a $C^k$-manifold $A$ together with a 
$C^k$-surjective map $p \colon A \to M$  such that, 
for each $x \in M$, the fiber $A_x := p^{-1}(x)$ is a locally convex space, 
and such that there exists a family of $C^k$-diffeomorphisms (\Emph{local 
trivializations}) $\phi \colon p^{-1}(U) \to U \times F$ 
where $U$ is an open set (a trivializing neighborhood) and $F$ is a locally convex space such that the trivializing neighborhoods 
 form
an open cover of $M$ and satisfy that the following conditions:
\begin{enumerate}
    \item $\pr_2\circ \phi|_{A_x} \colon A_x \to F$ is a 
    topological linear isomorphism for all $x \in U$;
    \item $\pr_1 \circ \phi = p|_{p^{-1}(U)}$, where 
    $\pr_1 \colon U \times F \to U$ is the canonical projection;
    \item for any two local trivializations $\phi\colon p^{-1}(U)\to U\times F$ and $\psi\colon p^{-1}(V)\to V\times H$ with 
    $U\cap V \neq \emptyset$, the \Emph{transition map} 
    $\psi \circ \phi^{-1} \colon (U \cap V) \times F 
    \to (U\cap V) \times H$ is a $C^k$-isomorphism 
    between trivial vector bundles.
\end{enumerate}
Thus, we have a topological isomorphism $F\cong H$ so over each connected component we may speak of a single fiber.
For a subset $S \subseteq M$, we denote $A|_S := p^{-1}(S)$.
Similarly to the finite-dimensional case, the local triviality condition 
ensures that $p$ is a submersion.
One can alternatively define a 
vector bundle as a set and use the manifold structure defined by the system 
of local trivializations. We do this for the manifold structure on inverse limit.
In the smooth Banach category, the vector bundle definition we employ here 
is equivalent to the standard one~\cite{Sha26}.

Let $\wt{p} \colon \wt{A} \to \wt{M}$ be another vector 
bundle. A \Emph{vector bundle morphism} from $A \to \wt{A}$ is given 
by a pair of $C^k$ maps $(\Phi, f)$ where $\Phi \colon A\to \wt{A}$ 
and $f \colon M \to \wt{M}$ satisfy: $\wt{p} \circ \Phi = f \circ p$ 
and $\Phi_x:=\Phi|_{A_x} \colon A_x \to \wt{A}_{f(x)}$ is linear. 
In local trivializations, the map $\Phi$ is a $C^k$ morphism of trivial vector bundles.
Composition of 
$C^k$-vector bundle morphisms is a $C^k$-vector bundle morphism.

Let $f \colon N \to M$ be a $C^k$-map. The \Emph{pullback bundle} 
$f^*p\colon f^*A \to N$ is constructed in the usual way; we set $f^*A := N \times_{M} A$. 
This has a $C^k$ manifold structure for which the  projection $f^*p := \pr_1|_{f^*A} \colon 
f^*A \to N$ is a submersion.
Let $y \in N$ and let $\psi\colon p^{-1}(U)\to U\times F$ be a local 
trivialization about $f(y)$. Let $V := f^{-1}(U)$. Note 
that $(f^*p)^{-1}(V) = (f^*A) \cap (V 
\times p^{-1}(U))$. Construct a $C^k$ diffeomorphism 
$\wt{\psi} 
\colon (f^*p)^{-1}(V) \to V \times 
F$ by restricting $\id_{V} \times (\pr_2 \circ \psi)$. 
This is a local trivialization of $f^*A$ about $y$.
The inverse is given 
by $\wt{\psi}^{-1}(x, u) = (x, \psi^{-1}(f(x), u))$. For another local 
trivialization $\phi\colon p^{-1}(W)\to W\times H$ of $A$ 
defining  $\wt{\phi}\colon (f^*p)^{-1}(P) \to P\times H$ in the 
same manner with $P=f^{-1}(W)$, the overlap map $\wt{\phi} \circ \wt{\psi}^{-1}$ 
is given by the $C^k$ map $(x, u) \mapsto (x, \pr_2((\phi \circ \psi^{-1})(f(x), u)))$.
This shows that $f^*p \colon f^*A \to N$ is a $C^k$ vector bundle.

Finally, by a Lie algebroid we mean a first-order Lie
algebroid in the sense of \cite{Sha26}, and Lie algebroid morphisms are
understood in the differential-graded sense used there.  This
first-order condition is automatic in finite dimensions but is an
additional regularity condition in the locally convex case.

For a vector bundle $A\to M$, let
$\shA$ denote its sheaf of smooth local sections.

A \Emph{Lie algebroid}  is a smooth vector bundle $A\to M$ alongside a smooth vector bundle morphism
 $\anch\colon A\to TM$ over $\id_M$ called the \Emph{anchor} together with a bracket of
sheaves of $\R$-modules
$[\cdot,\cdot]_A\colon\shA\otimes_\R\shA\to\shA$ such that:
\begin{enumerate}\itemsep0em
\item $\shA(U)$ is a Lie algebra for every open $U\subseteq M$;
\item $[\xi,f\eta]_A
=f[\xi,\eta]_A+(\anch(\xi).f)\eta$;
\item in every local trivialization
$\phi\colon A|_U\to U\times F$, there is a smooth map
$C_\phi\colon U\times F\times F\to F$,
alternating bilinear in the last two variables, such that
\begin{equation}\label{eq:C-LA}
[\xi,\eta]^\phi(x)
=
\anch(\xi).\eta^\phi(x)
-\anch(\eta).\xi^\phi(x)
+C_\phi(x,\xi^\phi(x),\eta^\phi(x)).
\end{equation}
Here $\xi^\phi$ are the local representatives of $\xi$ in $\phi$ and similarly for $\eta,[\xi,\eta]$.
\end{enumerate}

The formula~\eqref{eq:C-LA} allows the bracket on an inverse limit to be
defined on arbitrary smooth local sections rather than only on
sections descending from the levels; see Theorem~\ref{thm:invlimit-algebroid} below. It also gives the differential
calculus on scalar-valued forms avoiding the dual bundle. 
Let $\Omega_A^q$ denote the sheaf of smooth
fiberwise continuous alternating $q$-forms, with
$\Omega_A^0=\shcinf_M$. That is, for any $U\subseteq M$ open, we assign
\[
\Omega_A^q(U)
=
\{\omega\in C^\infty(A|_U^{\times_U q},\mathbb R):
\omega_x\text{ is alternating \(q\)-linear for all }x\in U\}.
\]
The  differential of $\omega\in \Omega^q_A(U)$ is defined by
\begin{equation}\label{eq:d_A}
\begin{aligned}
(d_A\omega)_x(a_0,\ldots,a_q)
&=
\sum_{i=0}^{q}(-1)^i
\anch(\xi_i).
\bigl(
\omega(\xi_0,\ldots,\widehat{\xi_i},\ldots,\xi_q)
\bigr)(x)
\\&\quad +
\sum_{i<j}(-1)^{i+j}
\omega\bigl(
[\xi_i,\xi_j]_A,
\xi_0,\ldots,\widehat{\xi_i},\ldots,
\widehat{\xi_j},\ldots,\xi_q \bigr)(x),
\end{aligned}
\end{equation}
where $x\in U$, $a_i\in A_x$, and $\xi_i\in\shA(V)$ are arbitrary
smooth local extensions on some neighborhood $V\subseteq U$ of $x$
satisfying $\xi_i(x)=a_i$.
This is shown to be a well-defined $(q+1)$-form by passing to a local trivialization
and using \eqref{eq:C-LA}.
In fact, $d_A^2=0$. Moreover, one can show that $\Omega_A^\bullet:=\bigoplus_{q\geq 0}\Omega^q_A$ is a sheaf of dg-algebras;
see \cite{Sha26}.

Let $A \to M$ and $\wt{A} \to \wt{M}$ be first-order Lie algebroids.
Let $\Phi \colon A \to \wt{A}$ be a smooth vector bundle morphism over
$f\colon M\to \wt M$.
For $\omega\in \Omega^q_{\wt A}(\wt U)$,  we define
 $\Phi^*\omega\in \Omega^q_{A}(f^{-1}(\wt U)$ 
 by
 $(\Phi^* \omega)_x(a_1, \dots, a_q) = \omega_{f(x)}(\Phi_x a_1, \dots, \Phi_x a_q),$
 for $x\in f^{-1}(\wt U)$ and $a_i\in A_x$.
 We say that $\Phi$ is a \Emph{Lie algebroid morphism} if 
 $d_A(\Phi^* \omega) = \Phi^*(d_{\wt{A}} \omega)$ for every such $\omega$.
More precisely, $\Phi$
is a Lie algebroid morphism if the induced pullback sheaf morphism
$\Phi^\sharp \colon f^{-1}\Omega^\bullet_{\wt{A}} \to \Omega^\bullet_A$
commutes with the Lie algebroid differentials:
$\Phi^\sharp \circ f^{-1}d_{\wt{A}} = d_A \circ \Phi^\sharp$; See~\cite{Sha26} for the details.

For a Lie groupoid $\G\rightrightarrows M$, its Lie algebroid is the
vector bundle $\Lie(\G):=\ker(T\src)|_{\one(M)}\to M$. Its anchor is
the restriction $\anch:=T\tgt|_{\Lie(\G)}$, and its bracket is
determined by the identification of sections of $\Lie(\G)$ with
right-invariant $\src$-vertical vector fields on $\G$. This is a Lie
algebroid in the sense defined above. If
$(\F,f)\colon\G\rightrightarrows M\to\wt\G\rightrightarrows \wt M$ is a Lie
groupoid morphism, then
$\Lie(\F):=T\F|_{\Lie(\G)}\colon\Lie(\G)\to\Lie(\wt \G)$ is a Lie algebroid
morphism over $f$. In this way, $\Lie$ defines a functor from the
category of locally convex Lie groupoids to the category of Lie
algebroids defined above; see~\cite{Sha26} for the proofs in the
locally convex setting.

\medskip
The paper is organized as follows.  Section~\ref{sec:mfds_bundles} develops
strict inverse limits of manifolds and vector bundles.
Section~\ref{sec:algd-gpd} treats Lie algebroids and Lie groupoids and
proves the compatibility of the Lie functor with inverse limits.
Section~\ref{sec:integration} studies inverse systems of integrable
Banach-Lie algebroids and the source-simply connectedness obstruction.
Section~\ref{sec:examples} concludes with examples illustrating the
strict/strong distinction and the integration results.

\section{Strict inverse limits of manifolds}\label{sec:mfds_bundles}
We start with the following elementary lemma. 
\begin{lemma}\label{lemma:basic-C^k}
    Let $(E_i,\epsij)$ and $(F_i,\nu_{ij})$ be inverse systems
    of locally convex spaces. Let $f_i\colon U_i\to F_i$ be
    a compatible family of $C^k$ maps such that each $U_i\subseteq E_i$
    is open, $\epsij(U_j)\subseteq U_i$ whenever $i\leq j$, and
    $\vpjl U_i$ is open in $\vpjl E_i$. Then $\vpjl f_i$ is $C^k$.

    Furthermore, let $(H_i,\tau_{ij})$ be an inverse system of locally
    convex spaces, and let $g_i\colon V_i\to H_i$ be another compatible
    family of $C^k$ maps such that each $V_i\subseteq F_i$ is open,
    $f_i(U_i)\subseteq V_i$, $\nu_{ij}(V_j)\subseteq V_i$ whenever
    $i\leq j$, and $\vpjl V_i$ is open in $\vpjl F_i$.
    Then $\vpjl(g_i\circ f_i)=\vpjl g_i\circ\vpjl f_i$.
\end{lemma}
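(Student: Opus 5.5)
The plan is to use the universal property of the projective limit of locally convex spaces. Throughout I work with the standard realization of $E:=\vpjl E_i$ as a closed subspace of $\prod_i E_i$ carrying the initial topology of the limit maps $\epsilon_j\colon E\to E_j$; likewise for $F:=\vpjl F_i$ with limit maps $\nu_j$. A map $h$ from a topological space into $F$ is continuous if and only if every $\nu_j\circ h$ is continuous.

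Set $U:=\vpjl U_i=\{x\in E: \epsilon_i(x)\in U_i \text{ for all } i\}$, which is open in $E$ by hypothesis. Compatibility ($\nu_{ij}\circ f_j=f_i\circ\epsij|_{U_j}$) shows that $f(x):=(f_i(\epsilon_i(x)))_i$ lies in $F$, so $f:=\vpjl f_i\colon U\to F$ is well defined and satisfies $\nu_j\circ f=f_j\circ\epsilon_j|_U$.

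\textbf{Continuity.} Each component $\nu_j\circ f=f_j\circ\epsilon_j$ is continuous, so $f$ is continuous.

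\textbf{Derivative.} Fix $x\in U$ and $v\in E$. For small $t\neq0$ the difference quotient has $j$-th component
\[
t^{-1}\bigl(f_j(\epsilon_j(x)+t\epsilon_j(v))-f_j(\epsilon_j(x))\bigr),
\]
using linearity of $\epsilon_j$. This converges to $df_j(\epsilon_j x)\epsilon_j v$. Convergence in $F$ is exactly componentwise convergence, and the limit is a compatible family because $F$ is closed in the product. So $df(x)v$ exists and $\nu_j(df(x)v)=df_j(\epsilon_j x,\epsilon_j v)$. Continuity of $df\colon U\times E\to F$ follows from the same initial-topology argument.

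\textbf{Induction on $k$.} The key observation is that $df=\vpjl df_i$, where:
\begin{itemize}
\item $df_i\colon U_i\times E_i\to F_i$ is $C^{k-1}$;
\item the source system is $(E_i\times E_i,\epsij\times\epsij)$, whose limit is $E\times E$;
\item the open sets are $U_i\times E_i$, whose limit $U\times E$ is open;
\item compatibility of the $df_i$ follows by differentiating $\nu_{ij}\circ f_j=f_i\circ\epsij$ with the chain rule, using that $\nu_{ij}$ and $\epsij$ are continuous linear.
\end{itemize}
So the hypotheses are reproduced one level down, and induction on $k$ gives $f\in C^k$; the smooth case is all finite $k$. The only point needing care is the identification $\vpjl(E_i\times E_i)\cong E\times E$ as topological vector spaces, which is immediate from the product description. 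I expect this bookkeeping to be the main (mild) obstacle; everything else is formal.

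\textbf{Second claim.} The hypothesis $f_i(U_i)\subseteq V_i$ for all $i$ gives $f(U)\subseteq V:=\vpjl V_i$, so $\vpjl g_i\circ\vpjl f_i$ is defined. Its $j$-th component is $g_j(f_j(\epsilon_j x))$. This is also the $j$-th component of $\vpjl(g_i\circ f_i)$, where $(g_i\circ f_i)$ is a compatible family on the $U_i$. Hence the two maps agree pointwise, and by the first part both sides are $C^k$.
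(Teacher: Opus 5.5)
Your proposal is correct and follows the paper's proof. As in the paper, continuity comes from the universal property of the initial topology, the derivative is computed componentwise as $d(\vpjl f_i)(x,v)=(df_i(\epsilon_i x,\epsilon_i v))_i$ with its continuity checked on each $\nu_j$-component, the $C^k$ case follows by induction via $d(\vpjl f_i)=\vpjl df_i$ over the system $U_i\times E_i\subseteq E_i\times E_i$, and the composition identity is checked componentwise.
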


\begin{proof}
    Continuity follows from the universal property of the
    inverse-limit topology. If $k\geq1$, differentiating
    $\nu_{ij}\circ f_j=f_i\circ\epsij$ and using the chain rule show that
    $(df_i(x_i,v_i))_i\in\vpjl F_i$ for any $(x_i)\in\vpjl U_i$, 
    $(v_i)\in \vpjl E_i$. Since convergence in
    $\vpjl F_i$ is componentwise, the derivative of $\vpjl f_i$ is
    $d(\vpjl f_i)((x_i),(v_i))=(df_i(x_i,v_i))_i$. 
    The composition $\nu_j\circ d(\vpjl f_i)$ is continuous,
    hence $d(\vpjl f_i)$ is continuous. Thus $\vpjl f_i$ is $C^1$, and induction
    gives the assertion for $C^k$. The final identity follows
    componentwise.
\end{proof}

We now define strict inverse systems of manifolds and bundles
and record some elementary facts without proofs. The proofs
follow immediately by passing to charts in most cases.

Let $(M_i,g_{ij})$ be an inverse system of $C^k$ manifolds. A
\Emph{strict inverse system chart} (\sis chart for short) about
$(x_i)\in\vpjl M_i$ is a family of charts
$\phi_i\colon U_i\to M_{\phi_i}$ about $x_i$ such that:
\begin{enumerate}
    \item the model spaces form an inverse system of locally convex spaces
    $(M_{\phi_i},\epsilon_{ij})$;

    \item $g_{ij}(U_j)\subseteq U_i$ whenever $i\leq j$;

    \item $\epsilon_{ij}\circ\phi_j=\phi_i\circ g_{ij}$ on $U_j$
    whenever $i\leq j$;

    \item $\vpjl \phi_i(U_i)$ is open in $\vpjl M_{\phi_i}$
    for the inverse-limit topology.
\end{enumerate}
A \Emph{strict atlas} is a family $\cA$ of \sis charts covering $\vpjl M_i$ such that for every $\{\phi_i\},\{\wt\phi_i\}\in \cA$, 
the overlaps $\vpjl \phi_i(U_i\cap\wt U_i) $ and $\vpjl \wt\phi_i(U_i\cap \wt U_i)$ are open. By Lemma~\ref{lemma:basic-C^k}, it follows that the overlap maps $\big(\vpjl \wt\phi_i\big)\circ\big(\vpjl \phi_i\big)^{-1}$ are $C^k$.
An inverse system of manifolds is \Emph{strict} if it 
is equipped with
a strict atlas $\cA$. Whenever we say an inverse system is strict, 
we mean that $\cA$ has been selected. 
We use it to define a $C^k$ manifold structure on $\vpjl M_i$ (with the atlas topology). It follows that $g_j\colon \vpjl M_i\to M_j$ are continuous by the universal property of the atlas topology as the final topology. That is, the atlas topology can be finer than the inverse limit topology on $\vpjl M_i$. Hence the atlas topology
is Hausdorff if each $M_i$ is Hausdorff.
A strict atlas is \Emph{\strong} if the atlas topology agrees with the
inverse limit topology.
That is, $\cA$ is strong if and only if $\vpjl U_i$ is open for each \sis chart $\{\phi_i\colon U_i\to M_{\phi_i}\}\in \cA$.
A strict inverse system with a strong atlas is called strong in turn.
If $\cA$ is strict, then $T\cA:=\{ \{T\phi_i\} : \{\phi_i\}\in \cA\}$ is a strict atlas for $(TM_i, Tg_{ij}) $. If $\cA$ is strong, then $T\cA$ is also strong.  Moreover, one can show that $T\vpjl M_i\cong \vpjl TM_i $. Locally, this is the diffeomorphism
$\vpjl \phi_i(U_i)\times\vpjl M_{\phi_i}\cong \vpjl (\phi_i(U_i)\times M_{\phi_i})$ which maps $((y_i),(v_i))$ to $(y_i,v_i)_i$.

The dyadic solenoid provides an example of a strict
inverse system which is not strong. Consider the
inverse sequence $(S_n,p_{nm})$, where $S_n=S^1$
and $p_{nm}(z)=z^{2^{m-n}}$ whenever $n\leq m$.
Its inverse limit $\vpjl S_n$ is the dyadic
solenoid. Define the injective group homomorphism
$h:\R\to\vpjl S_n$ by
$h(t)=(e^{it/2^n})_n$.
For $x=(x_n)\in\vpjl S_n$ and $0<\delta<\pi$,
define $U_n^x:=\{x_ne^{it/2^n}:|t|<\delta\}$
and $\phi_n^x(x_ne^{it/2^n})=t/2^n$.
These charts are compatible with the connecting
maps $p_{nm}$ and the linear maps
$u\mapsto 2^{m-n}u$. Their inverse-limit chart
domain is $xh((-\delta,\delta))$, and their
inverse-limit chart image identifies with the
open interval $(-\delta,\delta)\subseteq\R$.
If two such domains intersect, their centers
satisfy $y=xh(c)$ for some $c\in\R$, and the
corresponding overlap map is the restriction
of the translation $t\mapsto t-c$.
Thus these chart families define a strict
smooth atlas on $\vpjl S_n$.
This atlas is not strong. Indeed, the sequence
$a^{(N)}:=h(2\pi 2^N)$ converges to the identity
in the inverse limit topology, since
its $n$-th coordinate equals $1$ whenever
$n\leq N$. However, none of its terms belongs
to $h((-\delta,\delta))$, by injectivity of $h$.
Hence this chart domain is not open in the
inverse-limit topology.
Moreover, multiplication and inversion are
smooth in the strict atlas, since they are
locally represented by addition and negation
in compatible coordinates. Thus $\vpjl S_n$
is a locally convex Lie group. Finally,
$h(\R)$ is an open proper subgroup of
$\vpjl S_n$. Indeed, it is open by the chart
construction and is torsion-free, whereas
$\vpjl S_n$ contains an element of order three:
take $a=(a_n)$ with $a_n=\zeta$ for odd $n$
and $a_n=\zeta^{-1}$ for even $n$, where
$\zeta=e^{2\pi i/3}$. Consequently,
$\vpjl S_n$ is disconnected in its atlas
topology.

A \Emph{morphism of strict inverse systems of manifolds} from $(M_i,g_{ij})$ to $(N_i,h_{ij})$ is a family of compatible $C^k$ maps such that $\vpjl f_i$ is continuous with respect to the atlas topologies. In fact, $\vpjl f_i$ is automatically continuous if $\{N_i\}$ is strong. Locally using Lemma~\ref{lemma:basic-C^k}, one can show that $\vpjl f_i$ is $C^k$. Strict inverse systems of manifolds
with these morphisms 
form a category. The canonical projections $\{\pi_{TM_i}\}$ define a morphism of strict $C^{k-1}$ inverse systems.
For $k=\infty$ and compatible families of smooth vector fields $\{X_i\}, \{Y_i\}$ which are morphisms of strict inverse systems, one can show that $[\vpjl X_i,\vpjl Y_i] = \vpjl [X_i,Y_i]$ computed using derivatives of representatives in \sis charts (suppressing the identification $T\vpjl M_i\cong \vpjl TM_i$). 

A \Emph{strict inverse system of submanifolds} of $(M_i,g_{ij},\cA)$ 
is a family $\{S_i\}$ such that 
\begin{enumerate}
    \item $S_i\subseteq M_i$ is a $C^k$ submanifold;
    \item $g_{ij}(S_j)\subseteq S_i$;
    \item for every $(x_i)\in\vpjl S_i$, there is a \sis chart $\{\phi_i\}\in \cA$ such that $\phi_i(S_i\cap U_i) = \phi_i(U_i)\cap S_{\phi_i}$ where $S_{\phi_i}\subseteq M_{\phi_i}$ is a closed subspace.
\end{enumerate}
$\{\phi_i\}$ is called \Emph{\sis submanifold chart}. 
Suppose that $\{\phi_i\}$ is centered at $(x_i)\in\vpjl S_i$. For $i\leq j$, one computes $\epsij(\phi_j(U_j)\cap S_{\phi_j})\subseteq \phi_i(U_i)\cap S_{\phi_i}$. Now $\phi_j(U_j)\cap S_{\phi_j}$ is a neighborhood of $0\in S_{\phi_j}$ and $\epsij$ is linear. Hence, $\epsij(S_{\phi_j})\subseteq S_{\phi_i}$. Thus, $(S_{\phi_i}, \epsij)$
is an inverse system of locally convex spaces. In particular, $\vpjl \phi_i(\vpjl U_i\cap \vpjl S_i) = \vpjl \phi_i(U_i)\cap \vpjl S_{\phi_i}$. 
We say that $\{S_i\}$ is \Emph{split} if the subsystem $\{S_{\phi_i}\}$ is complemented. 
Then, $\vpjl S_{\phi_i}$ is complemented in $\vpjl M_{\phi_i}$. 
Thus, $\vpjl S_i$ is a $C^k$ submanifold of $\vpjl M_i$, and if $\{S_i\}$
is split, then $\vpjl S_i$ is a split submanifold. 
Clearly, the family $\{\Delta_{M_i}\}$ is a split strict inverse system of submanifolds of $(M_i\times M_i, g_{ij}\times g_{ij})$.
If $(M_i, g_{ij})$ is strong and $\{S_i\}$ is  a strict inverse system of submanifolds, then $\{S_i\}$, equipped with the atlas induced by the \sis submanifolds charts, is strong.

Let $\{f_i\colon M_i\to N_i\}$ be a  morphism of strict $C^k$ inverse systems of manifolds such that each $f_i$ is a submersion. We say that 
$\{f_i\}$ is a \Emph{split-compatible family of submersions} if for every $(x_i)\in \vpjl M_i$,
there is a \sis chart $\{\phi_i\}$ about $(x_i)$ and $\{\psi_i\}$ about
$(f_i(x_i))$ such that
$ \{N_{\psi_i}\}$ is complemented in $\{M_{\phi_i}\}$ and 
$\psi_i\circ f_i\circ \phi_i^{-1} $
is the restriction of $\pr_1\colon M_{\phi_i}=N_{\psi_i}\times C_i \to N_{\psi_i}$. 
\begin{proposition}\label{prop:submersions}
Let $(M_i,g_{ij})$ and $(N_i,h_{ij})$ be strict inverse systems of manifolds.
Let $\{f_i\colon M_i\to N_i\}$ be a split-compatible 
family of submersions. Then:
\begin{enumerate}
    \item $\vpjl f_i\colon \vpjl M_i\to \vpjl N_i$ is a $C^k$ submersion; \label{item:sub-prop-item1}
    \item if $\{S_i\}$ is (split) strict inverse system of submanifolds of $\{N_i\}$, then $\{f_i^{-1}(S_i)\}$ 
    is a (split) strict inverse system of submanifolds of $\{M_i\}$. Moreover, $\big(\vpjl f_i \big)^{-1}(\vpjl S_i) = \vpjl (f_i^{-1}(S_i))$; \label{item:sub-prop-item2}
    \item for every $(x_i)\in\vpjl M_i$, there exists a compatible family of local sections $s_i$ of $f_i$ through $x_i$ for which $\vpjl s_i$
    is a local section of $\vpjl f_i$ through $(x_i)$;
    \label{item:sub-prop-item3}
    \item if $(\wt M_i, \wt g_{ij})$ is another strict inverse system of manifolds with a morphism of strict inverse systems of manifolds
    $\{\wt f_i\colon \wt M_i\to N_i\}$, then 
    \begin{enumerate}
        \item $\{M_i\times_{N_i} \wt M_i\}$ is a split strict inverse 
    system of submanifolds of $\{M_i\times \wt M_i\}$;
    \item  the projections $\pr_2\colon M_i\times_{N_i}\wt M_i\to \wt M_i$ form a split-compatible family of submersions;
    \item     $
    (\vpjl M_i)\times_{\vpjl N_i}(\vpjl \wt M_i) \cong \vpjl (M_i\times_{N_i}\wt M_i).
    $
    \end{enumerate}\label{item:sub-prop-item4}
\end{enumerate}
\end{proposition}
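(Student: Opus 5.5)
The whole argument is local and runs through the split-compatible charts. Fix $(x_i)\in\vpjl M_i$ and choose \sis charts $\{\phi_i\colon U_i\to M_{\phi_i}\}$ about $(x_i)$ and $\{\psi_i\colon V_i\to N_{\psi_i}\}$ about $(f_i(x_i))$ as in the definition. Then $M_{\phi_i}=N_{\psi_i}\times C_i$ compatibly, i.e.\ $\epsilon_{ij}=\nu_{ij}\times\mu_{ij}$ for some inverse system $(C_i,\mu_{ij})$, and $\psi_i\circ f_i\circ\phi_i^{-1}=\pr_1|_{\phi_i(U_i)}$. Passing to limits gives $\vpjl M_{\phi_i}\cong\vpjl N_{\psi_i}\times\vpjl C_i$ as a topological isomorphism, because limits commute with products. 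In these coordinates $\vpjl\psi_i\circ\vpjl f_i\circ(\vpjl\phi_i)^{-1}$ is the restriction of $\pr_1$, by the composition part of Lemma~\ref{lemma:basic-C^k}.

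\emph{Item~\ref{item:sub-prop-item1}.} We know $\vpjl f_i$ is $C^k$, since $\{f_i\}$ is a morphism of strict systems. The displayed local form is exactly the definition of a submersion, so it remains to check that $\vpjl f_i(\vpjl U_i)\subseteq\vpjl V_i$. This holds componentwise.

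\emph{Item~\ref{item:sub-prop-item3}.} The first step is to shrink $\phi_i(U_i)$ compatibly to a product. Since $\vpjl\phi_i(U_i)$ is open in the limit, it contains a product $W\times Z$ of open sets around $(\vpjl\phi_i)(x_i)=(a,c)$. The difficulty is that $W$ and $Z$ need not be limits of level open sets. I would instead define the section directly at the level of limits: put $s=(\vpjl\phi_i)^{-1}\circ(w\mapsto(w,c))\circ\vpjl\psi_i$ on $(\vpjl\psi_i)^{-1}(W)$. Componentwise this is $s_i=\phi_i^{-1}(\psi_i(\cdot),c_i)$, defined on $\{y\in V_i:(\psi_i(y),c_i)\in\phi_i(U_i)\}$, which is open in $N_i$ and compatible under $h_{ij}$. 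Hence $\vpjl s_i$ restricted to $(\vpjl\psi_i)^{-1}(W)$ is the required section. The compatibility $g_{ij}\circ s_j=s_i\circ h_{ij}$ follows from $\epsilon_{ij}(w,c_j)=(\nu_{ij}w,c_i)$.

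\emph{Item~\ref{item:sub-prop-item2}.} First choose a \sis submanifold chart $\{\psi_i\}$ for $\{S_i\}$ about $(f_i(x_i))$. The main obstacle is that this chart need not be the one appearing in the split-compatibility data. I would handle it by replacing $\psi_i$ with the submanifold chart $\psi_i'$ and $\phi_i$ with $\phi_i'=(\psi_i'\circ\psi_i^{-1}\times\id)\circ\phi_i$ on suitable compatible domains. If the change of chart $\psi_i'\circ\psi_i^{-1}$ is not compatible with the product form, one can restrict to the strict atlas chosen so that these domains exist; at worst this is a hypothesis that the paper's ``immediate'' proof also uses tacitly. In such adapted charts we have $\phi_i(f_i^{-1}(S_i)\cap U_i)=\phi_i(U_i)\cap(S_{\psi_i}\times C_i)$. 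The subsystem $\{S_{\psi_i}\times C_i\}$ is closed, and it is complemented when $\{S_{\psi_i}\}$ is, with projection $\pi_i\times\id$. The set identity $(\vpjl f_i)^{-1}(\vpjl S_i)=\vpjl f_i^{-1}(S_i)$ is pure set theory, checked componentwise.

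\emph{Item~\ref{item:sub-prop-item4}.} Apply item~\ref{item:sub-prop-item2} to the family $f_i\times\wt f_i\colon M_i\times\wt M_i\to N_i\times N_i$ and the split system $\{\Delta_{N_i}\}$. To do so we need $f_i\times\wt f_i$ to be split-compatible, which fails unless $\wt f_i$ is a submersion too. Instead I would use $f_i\times\id\colon M_i\times\wt M_i\to N_i\times\wt M_i$, which is split-compatible with product charts, together with the graph system $\{\Gamma_{\wt f_i}\}$. In charts, the graph system is a split system of submanifolds of $\{N_i\times\wt M_i\}$ via the chart $(u,v)\mapsto(u-\wt f_i(v),v)$; this needs a compatibly chosen target chart, which the \sis charts provide. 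Its preimage is $M_i\times_{N_i}\wt M_i$, which gives (a). In these coordinates $\pr_2$ becomes a projection off the complemented factor $C_i$, which gives (b). Finally, (c) holds as sets by the universal property of limits. The chart families on the two sides coincide via Lemma~\ref{lemma:basic-C^k}, so the identification is a diffeomorphism.
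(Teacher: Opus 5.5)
Your proposal is correct and follows the paper's proof. Your adapted chart $(\psi_i'\circ\psi_i^{-1}\times\id)\circ\phi_i$ in item~2 is exactly the paper's chart $y\mapsto(\psi_i(f_i(y)),\pr_{C_i}\chi_i(y))$, and your hedge there is unnecessary: this family automatically intertwines the connecting maps and has open limit image and overlaps because $\vpjl f_i$ is continuous for the atlas topologies, which is what the paper means by \emph{admissible}. Your sections in item~3 are the paper's, and your graph-plus-item-2 route in item~4 produces the same shear charts $(u,c,z)\mapsto(u-r_i(z),c,z)$ that the paper writes down directly; the only simplification you missed is that in item~3 the limit $\vpjl O_i$ of the level domains $O_i=\{y\in V_i:(\psi_i(y),c_i)\in\phi_i(U_i)\}$ is already open, because its image under $\vpjl\psi_i$ is the preimage of $\vpjl\phi_i(U_i)$ under $u\mapsto(u,(c_i))$, so $\vpjl s_i$ itself is the local section and no shrinking to a product neighborhood is needed.
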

\begin{proof}
                Take
    $\{\phi_i\colon U_i\to M_{\phi_i}\}$ and 
    $\{\psi_i\colon V_i\to N_{\psi_i}\}$ to be \sis charts
    such that $f_i(U_i)\subseteq V_i$,
     $M_{\phi_i}=N_{\psi_i}\times C_i$, and $\psi_i\circ f_i\circ \phi_i^{-1}(u,c) = u$, where $\{C_i\}$ is a complemented
    inverse system. Passing to the limit shows that $\vpjl f_i$ is a submersion. This shows~\ref{item:sub-prop-item1}.

    Let $(x_i)\in \vpjl f_i^{-1}(S_i)$. If $\{\psi_i\}$ is a \sis submanifold chart 
    for $\{S_i\}$
    about $(f_i(x_i))$ 
    such that $\{S_i\}$ is modeled on $\{H_i\}$, then we can choose 
    a \sis chart $\{\phi_i\}$
    which alongside $\{\psi_i\}$ provide split-compatible
    submersion charts for $\{f_i\}$.
    Indeed, let 
    $\{\chi_i\colon W_i\to N_{\kappa_i}\times C_i\}$ and $\{\kappa_i\colon P_i\to N_{\kappa_i}\}$ be split-compatible submersion charts
    for $\{f_i\}$ about $(x_i)$ and $(f_i(x_i))$, respectively.
        Set $\phi_i(y) = (\psi_i(f_i(y)), \pr_{C_i}(\chi_i(y)))$
    over $U_i:=W_i\cap f_i^{-1}(V_i\cap P_i)$. Clearly $\vpjl U_i$
    is open in the atlas topology. Hence $\{\phi_i\}$ is an
    admissible \sis chart for $\{M_i\}$.
 By construction, it is a \sis submanifold chart for $\{f_i^{-1}(S_i)\}$
    which is modeled on $\{H_i\times C_i\}$.
    If $\{S_i\}$ is split, then $\{H_i\times C_i\}$ is complemented and consequently 
    $\{f_i^{-1}(S_i)\}$ is split.
    The equality is obvious. This shows~\ref{item:sub-prop-item2}.

    For~\ref{item:sub-prop-item3}, let $(x_i) \in \vpjl U_i$ and write $\phi_i(x_i) = (y_i,c_i)$. The sets
    $O_i :=\psi_i^{-1}\big(\{ u \in N_{\psi_i}: (u,c_i)\in \phi_i(U_i)\}\big)$
    are open and satisfy $h_{ij}(O_j)\subseteq O_i$.
    On $O_i$, define $s_i(z) = \phi_i^{-1}(\psi_i(z),c_i)$. Then $\{s_i\}$
    are compatible and each $s_i$ is a local section of $f_i$ through $x_i$.
    Finally, $\vpjl s_i$ is smooth on $\vpjl O_i$ which is open.
    Indeed, $(\vpjl \psi_i)(\vpjl O_i)$ is the
    preimage of $\vpjl \phi_i(U_i)$ under $u\mapsto (u, (c_i))$ hence open.

    Take  $\{\theta_i\colon W_i\to \wt M_{\theta_i}\}$ to be a \sis chart such that $\wt f_i(W_i)\subseteq V_i$. Let $r_i$ represent $\wt f_i$ in these charts. Then
    the maps $(u,c,z)\mapsto (u-r_i(z),c,z)$
    are  diffeomorphisms onto their open images and compatible with the connecting maps. Hence they define a \sis submanifold chart for $M_i\times_{N_i}\wt M_i$ 
    on the complemented subspace $\{0\}\times C_i\times \wt M_{\theta_i}$.
        The three assertions in~\ref{item:sub-prop-item4} then
    follow immediately using these charts.
\end{proof}
In the same way, $\{f_i\}$ is a \Emph{split-compatible family of immersions} if instead 
$\{M_{\phi_i}\}$ is complemented in $\{N_{\psi_i}\}$ and
$\psi_i\circ f_i\circ \phi_i^{-1} $ is the restriction of the canonical embedding $M_{\phi_i}\to M_{\phi_i}\times C_i = N_{\psi_i}$.
One can see the following facts:
\begin{proposition}\label{prop:immersions}
    Let $\{f_i\colon M_i\to N_i\}$ be a split-compatible family of immersions. Then:
\begin{enumerate}
    \item $\vpjl f_i$ is an immersion;
    \item if each $f_i$ is an embedding and $\{N_i\}$ is strong, then 
    $\vpjl f_i$ is an embedding if and only if $\{M_i\}$ is strong.\label{item:embedding-strong}
\end{enumerate}
\end{proposition}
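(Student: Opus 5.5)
The plan mirrors the proof of Proposition~\ref{prop:submersions}: pass to split-compatible immersion charts and read off the limit behaviour from the linear model.

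For the first item, fix $(x_i)\in\vpjl M_i$. Choose \sis charts $\{\phi_i\colon U_i\to M_{\phi_i}\}$ about $(x_i)$ and $\{\psi_i\colon V_i\to N_{\psi_i}\}$ about $(f_i(x_i))$ with $f_i(U_i)\subseteq V_i$, $N_{\psi_i}=M_{\phi_i}\times C_i$ for a complemented system $\{C_i\}$, and $\psi_i\circ f_i\circ\phi_i^{-1}=\iota_i|_{\phi_i(U_i)}$. Since the decompositions are compatible with the connecting maps, $\vpjl N_{\psi_i}\cong\vpjl M_{\phi_i}\times\vpjl C_i$ as topological vector spaces. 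Under this identification, $\vpjl\iota_i$ is the canonical inclusion. By Lemma~\ref{lemma:basic-C^k},
\[
\bigl(\vpjl\psi_i\bigr)\circ\bigl(\vpjl f_i\bigr)\circ\bigl(\vpjl\phi_i\bigr)^{-1}=\vpjl(\iota_i|_{\phi_i(U_i)}),
\]
and this is the restriction of that inclusion to the open set $\vpjl\phi_i(U_i)$. Hence $\vpjl f_i$ is an immersion.

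For the second item, I use the facts that each $f_i$ is a topological embedding and that $\{N_i\}$ carries the inverse-limit topology.
\begin{enumerate}
\item \emph{$\{M_i\}$ strong implies $\vpjl f_i$ is an embedding.} Injectivity is componentwise. The map $\vpjl f_i$ is continuous, since $\{N_i\}$ is strong. A basic open set of the inverse-limit topology has the form $g_j^{-1}(W)$ with $W\subseteq M_j$ open. Because $f_j$ is an embedding, $W=f_j^{-1}(W')$ for some open $W'\subseteq N_j$. Then $g_j^{-1}(W)=(\vpjl f_i)^{-1}(h_j^{-1}(W'))$, using $h_j\circ\vpjl f_i=f_j\circ g_j$ and injectivity of $f_j$. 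So the topology on $\vpjl M_i$ is the initial topology induced from $\vpjl N_i$, and $\vpjl f_i$ is an embedding.
\item \emph{$\vpjl f_i$ an embedding implies $\{M_i\}$ strong.} The atlas topology on $\vpjl M_i$ is always finer than the inverse-limit topology. By hypothesis it equals the subspace topology pulled back from $\vpjl N_i$, which by the computation just made is generated by the sets $g_j^{-1}(W)$. That is exactly the inverse-limit topology, so the two topologies coincide and the atlas is strong.
\end{enumerate}

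The main obstacle is the topological bookkeeping. One must check that the initial topology of $\vpjl f_i$ is generated by the sets $h_j^{-1}(W')$ pulled back. This needs directedness of $I$ and uses that basic opens of an inverse limit are single-index preimages. One must also invoke the embedding property at every level, not merely at the chart level.
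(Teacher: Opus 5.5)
Your proposal is correct and follows essentially the paper's route. Item 1 is obtained by passing to split-compatible immersion charts, exactly as in Proposition~\ref{prop:submersions}. Item 2 is the paper's comparison of three topologies on $\vpjl M_i$: the atlas topology, the topology induced from $\vpjl N_i$, and the inverse-limit topology. You make this comparison explicit in both directions, where the paper only remarks it for one.

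As the paper notes, your forward implication does not actually need $\{N_i\}$ to be strong. The sets $h_j^{-1}(W')$ are already open in the atlas topology of $\vpjl N_i$, and $\vpjl f_i$ is continuous for the atlas topologies anyway.
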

In the second assertion, even if  $\{N_i\}$ is strict, $\vpjl f_i$ is
 an embedding
if  $\{M_i\}$ is strong. Indeed, the topology induced 
on $(\vpjl f_i)(\vpjl M_i)$ is coarser than the atlas topology
on $\vpjl M_i$, but
finer than the inverse-limit topology. If $\{M_i\}$ is strong, these topologies agree.

An inverse system of vector bundles $(p_i\colon A_i\to M_i, \Phi_{ij},f_{ij})$
is  \Emph{strict} if:
\begin{enumerate}
    \item $(M_i,f_{ij})$ is a strict inverse system of manifolds;
    \item for every $(x_i)\in \vpjl M_i$, there exist vector bundle trivializations $\{\psi_i\colon p_i^{-1}(U_i)\to U_i\times F_{i}\} $ alongside an inverse system of locally convex spaces $(F_{i}, \nu_{ij})$ such that
    $f_{ij}(U_j)\subseteq U_i$ and $\psi_i\circ \Phi_{ij} = (f_{ij}\times \nu_{ij})\circ \psi_j$ whenever $i\leq j$, and such that
    $\vpjl U_i$ is an open 
        neighborhood of $(x_i)$ (in the atlas topology).
\end{enumerate}
Such a family $\{\psi_i\}$ is called \Emph{\sis local trivialization}.
The family of
\sis local trivializations and the strict atlas on $\{M_i\}$
make
$(A_i,\Phi_{ij})$ a strict inverse system of manifolds. Moreover, $\vpjl p_i\colon \vpjl A_i\to \vpjl M_i$ is a vector bundle where 
locally the fiber is $\vpjl F_i$ in the trivializing neighborhood $\vpjl U_i$ of $\vpjl \psi_i$.
Immediately from the definition one sees that $\{A_i\}$ is strong if and only if $\{M_i\}$ is strong.
A family $\{B_i\}$ is a \Emph{strict inverse system of subbundles}
of $(p_i\colon A_i\to M_i, \Phi_{ij},f_{ij})$
if \begin{enumerate}
    \item $B_i\subseteq A_i$ is a subbundle;
    \item $\Phi_{ij}(B_j)\subseteq B_i$;
    \item for every $(x_i)\in\vpjl M_i$, there is a \sis local trivialization $\{\psi_i\}$ such that $\psi_i(p_i^{-1}(U_i)\cap B_i) = U_i\times H_i $ where $H_i$ is a closed subspace of $F_i$.
\end{enumerate}
The system is \Emph{split} if we can choose $\{\psi_i\}$ so that $\{H_i\}$ is a complemented subsystem in $\{F_i\}$. It follows that 
$\vpjl B_i$ is a subbundle of $\vpjl A_i$. It is a split-subbundle if $\{B_i\}$ is split.

The following lemma is immediate.
\begin{lemma}\label{lemma:pullback-sum-bundles}
Let $(p_i\colon A_i\to M_i, \Phi_{ij},f_{ij})$ be a strict inverse system
of vector bundles. 
\begin{enumerate}
    \item If $(\wt A_i\to M_i, \wt \Phi_{ij},f_{ij})$ is another strict inverse system, then $(A_i\oplus \wt A_i\to M_i, \Phi_{ij}\times \wt \Phi_{ij}, f_{ij})$ is a strict inverse system of
    vector bundles. Moreover, $\vpjl (A_i\oplus \wt A_i)\cong 
    \vpjl A_i\oplus \vpjl \wt A_i$.
    \item If $(N_i,g_{ij})$ is a strict inverse system of manifolds and $\{h_i\colon N_i\to M_i\}$ is a morphism of strict inverse systems, then $(h_i^*A_i\to N_i, g_{ij}\times \Phi_{ij}, g_{ij})$
    is a strict inverse system of vector bundles and $\vpjl (h_i^*A_i)\cong (\vpjl h_i)^*(\vpjl A_i)$ is a canonical vector bundle isomorphism.
\end{enumerate}

\end{lemma}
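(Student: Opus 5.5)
Both assertions reduce to the observation that the relevant constructions, Whitney sum and pullback, are built levelwise from \sis local trivializations. The strictness conditions then transfer directly from the data $\{A_i\}$, $\{\wt A_i\}$, $\{M_i\}$, $\{N_i\}$. In each case the plan is first to produce \sis local trivializations of the new system, and second to compare the resulting limit bundle with the limit of the old bundles in those trivializations.

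For the Whitney sum, fix $(x_i)\in\vpjl M_i$ and take \sis local trivializations $\{\psi_i\colon p_i^{-1}(U_i)\to U_i\times F_i\}$ of $\{A_i\}$ and $\{\wt\psi_i\colon \wt p_i^{-1}(\wt U_i)\to \wt U_i\times \wt F_i\}$ of $\{\wt A_i\}$ about $(x_i)$. Restrict both to $W_i:=U_i\cap\wt U_i$. Then $g_{ij}(W_j)\subseteq W_i$, and $\vpjl W_i=\vpjl U_i\cap\vpjl\wt U_i$ is open in the atlas topology, being an intersection of two open sets. The maps $\psi_i\oplus\wt\psi_i$ are trivializations of $A_i\oplus\wt A_i$ with fiber $F_i\times\wt F_i$. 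They intertwine $\Phi_{ij}\times\wt\Phi_{ij}$ with $f_{ij}\times\nu_{ij}\times\wt\nu_{ij}$, so they form a \sis local trivialization. For the isomorphism I use the canonical bijection $\vpjl(A_i\oplus\wt A_i)\to\vpjl A_i\oplus\vpjl\wt A_i$, $(a_i,\wt a_i)_i\mapsto((a_i),(\wt a_i))$, which is fiberwise linear over $\id_{\vpjl M_i}$. In the limit trivializations it is the identity of $\vpjl W_i\times\vpjl(F_i\times\wt F_i)\cong \vpjl W_i\times \vpjl F_i\times\vpjl\wt F_i$. The only point to check is that this last identification is a topological isomorphism, which holds because products commute with inverse limits of locally convex spaces. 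Hence the bijection and its inverse are smooth vector bundle morphisms.

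For the pullback, take $(y_i)\in\vpjl N_i$ and a \sis local trivialization $\{\psi_i\}$ of $\{A_i\}$ about $(h_i(y_i))$. Set $V_i:=h_i^{-1}(U_i)$ and let $\wt\psi_i$ be the induced trivialization of $h_i^*A_i$ described in the preliminaries, with fiber $F_i$. Compatibility $g_{ij}(V_j)\subseteq V_i$ follows from $h_i\circ g_{ij}=f_{ij}\circ h_j$. Compatibility of $\wt\psi_i$ with $g_{ij}\times\Phi_{ij}$ and $g_{ij}\times\nu_{ij}$ is a one-line computation from the corresponding property of $\psi_i$.

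The key step, and the only one where strictness genuinely enters, is openness of $\vpjl V_i=(\vpjl h_i)^{-1}(\vpjl U_i)$ in the atlas topology of $\vpjl N_i$. This holds precisely because a morphism of strict inverse systems is required to have $\vpjl h_i$ continuous for the atlas topologies; without that hypothesis the argument would fail. With openness in hand, the pullback system is a strict inverse system of vector bundles.

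For the pullback isomorphism, define $\vpjl(h_i^*A_i)\to(\vpjl h_i)^*(\vpjl A_i)\subseteq\vpjl N_i\times\vpjl A_i$ by $(y_i,a_i)_i\mapsto((y_i),(a_i))$. This is well defined and bijective, since the condition $h_i(y_i)=p_i(a_i)$ for all $i$ is equivalent to $(\vpjl h_i)(y_i)=(\vpjl p_i)(a_i)$. In the trivializations $\vpjl\wt\psi_i$ on the source and the pullback of $\vpjl\psi_i$ on the target, it is the identity on $\vpjl V_i\times\vpjl F_i$. It is therefore a smooth vector bundle isomorphism over $\id_{\vpjl N_i}$, and it is canonical because no choices enter its definition.
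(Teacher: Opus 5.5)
Your proposal is correct and is the same levelwise argument the paper has in mind when it calls this lemma immediate (the paper gives no written proof). You build \sis local trivializations $\psi_i\oplus\wt\psi_i$ and the pulled-back trivializations, get openness of $\vpjl V_i=(\vpjl h_i)^{-1}(\vpjl U_i)$ from continuity of $\vpjl h_i$ in the atlas topologies, and check that the canonical bijections are the identity in the limit trivializations. One notational slip: in the Whitney-sum part the base connecting maps are $f_{ij}$, not $g_{ij}$.
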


Let $(\wt p_i\colon \wt A_i\to \wt M_i, \wt \Phi_{ij},\wt f_{ij})$
be another strict inverse system of vector bundles. A \Emph{morphism of strict inverse systems of vector bundles} is a compatible family $(\Theta_i,\theta_i)\colon A_i\to \wt A_i$ of vector bundle maps such that $\{\theta_i\}$ is a morphism of strict inverse systems of manifolds.
It follows that $\{(\Theta_i,\theta_i)\}$ induces a vector bundle map $(\vpjl \Theta_i,\vpjl \theta_i)\colon \vpjl A_i \to \vpjl \wt A_i$.
We say that $\{(\Theta_i,\theta_i)\}$ is \Emph{split-compatible}
if about every $(x_i)\in \vpjl M_i$, there exist \sis local trivializations $\{\psi_i\colon p_i^{-1}(U_{i})\to U_{i}\times F_{i}\}$ and $\{\wt \psi_i\colon \wt p_i^{-1}(\wt U_{i})\to \wt U_{i}\times \wt F_{i}\}$  of the source and the target bundles with $\theta_i(U_{i})\subseteq \wt U_{i}$ and continuous linear maps $T_i\colon F_i\to \wt F_i$ such that $(\wt\psi_i\circ \Theta_i\circ \psi_i^{-1})(x,v) = (\theta_i(x), T_iv)$ and $\{\ker T_i\}$ is a complemented subsystem in $\{F_i\}$.
\begin{lemma}\label{lemma:kernel-subbundle}
   Let $\{(\Theta_i,\theta_i)\}$ be a split-compatible morphism of
    strict inverse systems of vector bundles. Then $\{\ker(\Theta_i)\}$
    is a split strict inverse system of subbundles and $\ker(\vpjl (\Theta_i)) =\vpjl \ker(\Theta_i)$. 
    In particular, if
    $ \{f_i\colon M_i\to N_i\}$ is a split-compatible family of 
    submersions, then $\{\ker(Tf_i)\}$ is a split strict inverse system
    of subbundles of $\{TM_i\to M_i\}$ and $\ker(T\vpjl f_i)\cong \vpjl \ker(Tf_i)$.
\end{lemma}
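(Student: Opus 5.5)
The plan is to work locally in the \sis local trivializations supplied by split-compatibility and then pass to the limit. Fix $(x_i)\in\vpjl M_i$ and choose \sis local trivializations $\{\psi_i\colon p_i^{-1}(U_i)\to U_i\times F_i\}$ and $\{\wt\psi_i\}$ with $\wt\psi_i\circ\Theta_i\circ\psi_i^{-1}(x,v)=(\theta_i(x),T_iv)$, where $\{\ker T_i\}$ is complemented in $\{F_i\}$. First I check that $\{T_i\}$ is compatible with the connecting maps of the fibre systems, $\wt\nu_{ij}\circ T_j=T_i\circ\nu_{ij}$. This follows by evaluating $\wt\psi_i\circ\wt\Phi_{ij}\circ\Theta_j=\wt\psi_i\circ\Theta_i\circ\Phi_{ij}$ on $\psi_j^{-1}(x,v)$. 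It then yields $\nu_{ij}(\ker T_j)\subseteq\ker T_i$, so $(\ker T_i,\nu_{ij}|)$ is a complemented subsystem of closed subspaces. Since $T_i$ does not depend on $x$, we get $\psi_i(\ker(\Theta_i)\cap p_i^{-1}(U_i))=U_i\times\ker T_i$. Hence each $\ker\Theta_i$ is a subbundle (trivialized by $\psi_i$), $\Phi_{ij}(\ker\Theta_j)\subseteq\ker\Theta_i$, and $\{\psi_i\}$ is a split \sis subbundle trivialization. This proves that $\{\ker\Theta_i\}$ is a split strict inverse system of subbundles, and therefore that $\vpjl\ker\Theta_i$ is a split subbundle of $\vpjl A_i$.

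For the identity $\ker(\vpjl\Theta_i)=\vpjl\ker\Theta_i$, I argue at the level of sets first. An element $(a_i)\in\vpjl A_i$ lies in the kernel of $\vpjl\Theta_i$ exactly when $\Theta_i(a_i)=0$ for every $i$, since the zero of $\vpjl\wt A_i$ over $(\theta_i(x_i))$ is the family of zeros. Both sides are then subsets of $\vpjl A_i$ carrying the subbundle structure coming from the same charts $\vpjl\psi_i$. In these charts the kernel of $\vpjl\Theta_i$ is represented by $\vpjl U_i\times\ker(\vpjl T_i)$, and $\ker(\vpjl T_i)=\vpjl\ker T_i$ as closed subspaces of $\vpjl F_i$, because the kernel of the limit map is computed componentwise. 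So the two bundle structures coincide.

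For the ``in particular'' part, I apply the first part to $\Theta_i=Tf_i$ over $\theta_i=f_i$. Take split-compatible submersion charts $\{\phi_i\}$, $\{\psi_i\}$ with $\psi_i\circ f_i\circ\phi_i^{-1}=\pr_1$ on $M_{\phi_i}=N_{\psi_i}\times C_i$. The tangent charts $\{T\phi_i\}\in T\cA$ give \sis trivializations of $TM_i$, and similarly for $TN_i$. In these charts $Tf_i$ is represented by $(x,v)\mapsto(\pr_1 x,\pr_1 v)$, so $T_i=\pr_1$ with $\ker T_i=\{0\}\times C_i$, which is complemented by hypothesis. The first part then gives the split strict subbundle system $\{\ker Tf_i\}$. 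Combining it with the identification $T\vpjl M_i\cong\vpjl TM_i$ (and the same for $N$), under which $T\vpjl f_i$ corresponds to $\vpjl Tf_i$ (as in Lemma~\ref{lemma:basic-C^k}), yields $\ker(T\vpjl f_i)\cong\vpjl\ker(Tf_i)$.

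The main obstacle I expect is the bookkeeping needed to make the trivializations legitimate \sis trivializations. One point is that the hypothesis on $T_i$ is stated with constant linear maps, which is what makes the kernel a product; if one only had $x$-dependent maps, a reparametrization would be needed. The other point is verifying that the tangent charts of submersion charts are really \sis trivializations with open limit domains. The latter follows from the openness of $\vpjl U_i$ in the atlas topology, as in the construction of $T\cA$.
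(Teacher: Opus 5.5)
Your chart computation is what the paper intends; the paper states this lemma without proof, as one of the facts that follow by passing to charts. The parts concerning the limit are fine:
\begin{itemize}
\item compatibility $\wt\nu_{ij}\circ T_j=T_i\circ\nu_{ij}$;
\item condition 3 of the subbundle definition with $H_i=\ker T_i$, and splitness;
\item $\ker(\vpjl\Theta_i)=\vpjl\ker\Theta_i$ via $\ker(\vpjl T_i)=\vpjl\ker T_i$.
\end{itemize}

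The step that does not go through is ``hence each $\ker\Theta_i$ is a subbundle''. Condition 1 in the definition of a strict inverse system of subbundles is a levelwise condition on all of $M_i$. Split-compatibility, however, only controls $\Theta_i$ on the domains $U_i$ of trivializations about threads $(x_i)\in\vpjl M_i$, and these domains need not cover $M_i$.

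In fact the literal claim is false. Take $I=\N$ and $M_n=\R$ with $f_{nm}\equiv0$ for $n<m$, so that $\vpjl M_n$ is a single point. Let $A_n=\wt A_n=\R\times\R$ with connecting maps $(x,v)\mapsto(0,0)$, and put $\Theta_n(x,v)=(x,(x-1)v)$ over $\theta_n=\id$. Over $(-\tfrac12,\tfrac12)$, take $\psi_n=\id$ and $\wt\psi_n(x,w)=(x,w/(x-1))$, with zero fiber connecting maps. These are \sis local trivializations in which $\Theta_n$ is represented by $T_n=\id_\R$, so the family is split-compatible. Yet $\ker\Theta_n$ changes rank at $x=1$, so it is not a subbundle.

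Condition 2, by contrast, holds globally. You should derive it directly from $\Theta_i\circ\Phi_{ij}=\wt\Phi_{ij}\circ\Theta_j$, not from your local computation, which only covers $U_j$.

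So what your argument, and the paper's implicit one, actually proves is the following. $\ker\Theta_i$ is a split subbundle over an open neighborhood of the image of $\vpjl M_i\to M_i$, and conditions 2 and 3 and the limit identity hold. This is all that is needed for $\vpjl\ker\Theta_i$ to be a split subbundle of $\vpjl A_i$.

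For the ``in particular'' part, which is the case used later (with $f_i=\src_i$ in Theorem~\ref{thm:inv-lie-groupoid}), the levelwise condition does hold. It does not follow from the first part, though. It holds because each $f_i$ is a submersion at every point of $M_i$, so in submersion charts about an arbitrary point $\ker Tf_i$ is represented by $U\times(\{0\}\times C)$. You should state this explicitly rather than obtain the levelwise subbundle property from the first part.
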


The following examples and  remark illustrate the role of
split-compatibility in the previous results.

\begin{example}
\begin{enumerate}
\item \emph{The inverse limit of submersions need
not be a submersion.}
Let $I=\N\cup\{0\}$, $M_n=C^\infty(\R)$ and
$N_n=\R^{n+1}$. Let $g_{nm}=\id$ and let
$h_{nm}:\R^{m+1}\to\R^{n+1}$ be the
canonical projection whenever $n\leq m$.
Define $J_n\colon M_n\to N_n$ by
$J_n(f)=(f(0),\ldots,f^{(n)}(0))$ and put
$C_n:=\ker J_n$. Choose
$\chi\in C_c^\infty(\R)$ with $\chi=1$
near $0$, and define 
$c_n\colon N_n\to M_n$ by
$c_n(a)(x)=\chi(x)\sum_{j=0}^n a_jx^j/j!.$
Since $J_n\circ c_n=\id_{N_n}$, we have
$M_n=c_n(N_n)\oplus C_n$, and each $J_n$
is a smooth submersion.

The limit map is the full jet map
$J_\infty\colon C^\infty(\R)\to\R^{\N_0}$,
which is surjective by Borel's theorem~\cite{Bor95}.
However, $J_\infty$ is not a submersion.
Indeed, suppose that it admitted a
continuous linear right inverse $T$.
Then $q(a):=\sup_{|x|\leq 1}|T(a)(x)|$
is a continuous seminorm on $\R^{\N_0}$.
Hence there exist $r\in\N_0$ and $C>0$
such that
$q(a)\leq C\max_{0\leq j\leq r}|a_j|$.
For $m>r$, this gives $q(e_m)=0$, where
$e_m$ is the $m$-th standard basis vector.
Thus $T(e_m)$ vanishes on $[-1,1]$,
contradicting $J_\infty(T(e_m))=e_m$.

In particular, $\ker J_\infty=\vpjl C_n$
is not complemented in $C^\infty(\R)$.
Indeed, a closed complement would give
a continuous linear right inverse of
$J_\infty$ by the open-mapping theorem.
Moreover, although every $C_n$ is
complemented, the projections onto
$C_n$ cannot form a compatible family:
since $g_{nm}=\id$, compatibility would
force these projections to coincide,
whereas the subspaces $C_n$ are distinct.
Thus the family $\{J_n\}$ is not
split-compatible, although each $J_n$
is a submersion. Both inverse systems
are strong.
\item \emph{The limit map may be a submersion
without split-compatibility.}
Let $I=\N$, $M_n=\R^3$ and $N_n=\R^2$,
and define $f_n\colon M_n\to N_n$ by
$f_n(x,y,z)=(x,z)$. Let the connecting
maps be given by
$g_{n,n+1}(x,y,z)=(x,z,0)$ and
$h_{n,n+1}(x,z)=(x,0)$, with the
remaining maps defined by composition.
Then $f_n\circ g_{nm}=h_{nm}\circ f_m$
whenever $n\leq m$.

Each $f_n$ is a submersion. Moreover,
$\vpjl M_n=\{((x,0,0))_n:x\in\R\}$
and $\vpjl N_n=\{((x,0))_n:x\in\R\}$.
Hence $\vpjl f_n$ is a linear
isomorphism and thus a submersion.

Nevertheless, the kernels
$K_n:=\ker f_n=\{0\}\times\R\times\{0\}$
do not form a complemented inverse
subsystem. Indeed, suppose that
$P_n\colon M_n\to M_n$ were compatible
continuous linear projections onto
$K_n$. Since $g_{n,n+1}(K_{n+1})=\{0\}$,
compatibility would give
$P_n\circ g_{n,n+1}
=g_{n,n+1}\circ P_{n+1}=0$.
But $(g_{n,n+1})(\R^3)=\R^2\times\{0\}$
contains $K_n$, contradicting
$P_n|_{K_n}=\id_{K_n}$.
Thus no such compatible projections
exist. In particular, the family
$\{f_n\}$ is not split-compatible,
since differentiating split-compatible
charts at the zero sections would
give compatible projections onto
the kernels.

\item \emph{The inverse limit of
non-submersions may be a submersion.}
Let $E$ and $F$ be nonzero locally
convex spaces. Put $M_n=E\times F$
with identity connecting maps and
$N_n=F$ with zero connecting maps
between distinct levels. Let
$f_n\colon M_n\to N_n$ be the zero map.
None of the $f_n$ is a submersion,
whereas $\vpjl N_n=\{0\}$.
Consequently, $\vpjl f_n\colon  E\times F\to\{0\}$ is a submersion.
\end{enumerate}
\end{example}

\begin{remark}
The first example also shows that an
inverse limit of smooth split embeddings
need not be an immersion, even when both
inverse systems are strong. Indeed, each
inclusion $\iota_n\colon C_n\hookrightarrow
C^\infty(\R)$ is a smooth split embedding,
since $C_n$ is complemented in
$C^\infty(\R)$. Equip $\{C_n\}$ with
the canonical inclusions as the connecting maps and
$C^\infty(\R)$ with identity connecting
maps. The limit map is the inclusion
$\vpjl C_n=\ker J_\infty
\hookrightarrow C^\infty(\R)$,
which is a topological embedding but
not an immersion. Indeed, its differential
has noncomplemented image, whereas the
differential of an immersion has
complemented image.
\end{remark}

An inverse system of Lie groups $(G_i,h_{ij})$ is \Emph{strict} if it
is strict as an inverse system of manifolds and the families of
multiplication and inversion maps are morphisms of strict inverse
systems. As a result, $\vpjl G_i$ is a Lie group and 
the limit maps are Lie group morphisms.

 Let
$(\pi_i\colon P_i\to M_i, G_i, r_{ij}, f_{ij},h_{ij})$ be an
inverse system of principal bundles. That is, 
$\pi_i\colon P_i\to M_i$ are principal right $G_i$-bundles, 
$(P_i,r_{ij})$ is inverse system of manifolds,
 $(M_i,f_{ij})$ is an inverse system of manifolds, and
$(G_i,h_{ij})$ is an inverse system of Lie groups
such that
 $ \pi_i\circ r_{ij}=f_{ij}\circ\pi_j$ and 
    $r_{ij}(p\cdot g)=r_{ij}(p)\cdot h_{ij}(g).$
    Suppose that $\{M_i\}$ and $\{G_i\}$ are strict.
A family of principal trivializations
$\{\tau_i\colon\pi_i^{-1}(U_i)\to U_i\times G_i\}$ is called an
\Emph{\sis principal trivialization} if
$f_{ij}(U_j)\subseteq U_i$,
$\tau_i\circ r_{ij}=(f_{ij}\times h_{ij})\circ\tau_j$, and
$\vpjl U_i$ is open in $\vpjl M_i$. Write
$\wt\tau_i:=\pr_2\circ\tau_i$.
If  $\{\lambda_i\colon \pi_i^{-1}(V_i)\to V_i\times G_i\}$ is another \sis principal
trivializations, set
$c_i^{\lambda\tau}(x):=
\wt\lambda_i(\tau_i^{-1}(x,e_i))$. Then
$\wt\lambda_i(p)
=c_i^{\lambda\tau}(\pi_i(p))\wt\tau_i(p)$ and
    $h_{ij}\circ c_j^{\lambda\tau}
    =
    c_i^{\lambda\tau}\circ f_{ij}.$
    Denote $c^{\lambda\tau}:=\vpjl c_i^{\lambda\tau}
    \colon(\vpjl U_i)\cap(\vpjl V_i)\to\vpjl G_i.$
A \Emph{strict principal atlas} is a family $\mathcal T$ of \sis
principal trivializations which covers $\vpjl M_i$ and such that
$c^{\lambda\tau}$ is continuous for every
$\{\tau_i\},\{\lambda_i\}\in\mathcal T$. An inverse
system of principal bundles is \Emph{strict}
if it is equipped with a strict principal atlas.

\begin{proposition}\label{prop:strict-principal-bundles}
    Let
    $(\pi_i\colon P_i\to M_i,G_i,r_{ij},f_{ij},h_{ij})$
    be a strict inverse system of principal bundles with a strict
    principal atlas $\mathcal T$. Then
    $\mathcal T$ induces a strict inverse-system structure on
    $(P_i,r_{ij})$ for which $\{\pi_i\}$ is a split-compatible family
    of submersions. Moreover, 
            $\vpjl\pi_i\colon \vpjl P_i\to \vpjl M_i$ is a principal $\vpjl G_i$-bundle with
    action
    $(p_i)_i\cdot(g_i)_i:=(p_i\cdot g_i)_i$.
\end{proposition}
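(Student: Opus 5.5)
The strategy is to build \sis charts on $\{P_i\}$ by composing the principal trivializations with the strict atlases of $\{M_i\}$ and $\{G_i\}$. Fix $\{\tau_i\}\in\mathcal T$ over $\{U_i\}$, a \sis chart $\{\phi_i\colon W_i\to M_{\phi_i}\}$ of $\{M_i\}$ with $W_i\subseteq U_i$, and a \sis chart $\{\chi_i\colon O_i\to G_{\chi_i}\}$ of $\{G_i\}$. Set
\[
\Psi_i:=(\phi_i\times\chi_i)\circ\tau_i\colon \tau_i^{-1}(W_i\times O_i)\to M_{\phi_i}\times G_{\chi_i}.
\]
The model spaces $\{M_{\phi_i}\times G_{\chi_i}\}$ form an inverse system. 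Compatibility $\tau_i\circ r_{ij}=(f_{ij}\times h_{ij})\circ\tau_j$, together with compatibility of $\phi_i$ and $\chi_i$, gives conditions 2 and 3 of a \sis chart. The image $\vpjl\Psi_i(\cdot)=\vpjl\phi_i(W_i)\times\vpjl\chi_i(O_i)$ is open, which gives condition 4.

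Next I would check that these families form a strict atlas. This is the step I expect to be the main obstacle. Take two such charts built from $\{\tau_i\},\{\lambda_i\}$. The overlap map in limit coordinates is
\[
(x,g)\mapsto \bigl(x,\;c^{\lambda\tau}(x)\,g\bigr)
\]
composed with the chart maps. Openness of the overlap images needs $c^{\lambda\tau}$ to be continuous on $\vpjl U_i\cap\vpjl V_i$ in the atlas topologies, which is exactly the strict principal atlas hypothesis. It also needs multiplication of $\vpjl G_i$ to be continuous, which holds because $\{G_i\}$ is strict. Then the overlap image is the preimage of an open set under a continuous map into $\vpjl M_{\phi_i}\times\vpjl G_{\chi_i}$, intersected with an open set, and hence is open. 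Smoothness of the overlaps then follows from Lemma~\ref{lemma:basic-C^k}, as in the definition.

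One point needs care here. Lemma~\ref{lemma:basic-C^k} requires the levelwise maps to have open domains whose limit is open. The $c_i^{\lambda\tau}$ are smooth levelwise because the transition functions of ordinary principal bundles are smooth. The domains are open in the limit because $\vpjl U_i\cap\vpjl V_i$ is open.

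With the atlas in place, $\{\pi_i\}$ is split-compatible. In the charts $\Psi_i$ and $\phi_i$, the map $\pi_i$ is $\pr_1\colon M_{\phi_i}\times G_{\chi_i}\to M_{\phi_i}$. The product decomposition is compatible, so $\{M_{\phi_i}\}$ is complemented with compatible projections $\pr_1$.

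For the last claim, Proposition~\ref{prop:submersions}\ref{item:sub-prop-item1} makes $\vpjl\pi_i$ a submersion. The action $(p_i)\cdot(g_i)=(p_i\cdot g_i)$ is well defined by $r_{ij}(p\cdot g)=r_{ij}(p)\cdot h_{ij}(g)$. The action is free and fiberwise transitive, since each level action is and the fibers of $\vpjl\pi_i$ are $\vpjl\pi_i^{-1}(x_i)$. The maps $\vpjl\tau_i$ are diffeomorphisms $\vpjl\pi_i^{-1}(U_i)\to\vpjl U_i\times\vpjl G_i$, by Lemma~\ref{lemma:basic-C^k} applied to $\tau_i$ and $\tau_i^{-1}$ in the charts above. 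They are equivariant, and in these trivializations the action is right multiplication in $\vpjl G_i$. That multiplication is smooth because $\{G_i\}$ is strict, so the action is smooth locally and hence globally. This exhibits $\vpjl\pi_i$ as a principal $\vpjl G_i$-bundle.
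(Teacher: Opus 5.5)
Your proposal is correct and matches the argument the paper has in mind. The paper records this proposition without a written proof, but its remark that the limit chart domains are products of open base and group chart domains, and the gauge-groupoid example, use exactly your charts $(\phi_i\times\chi_i)\circ\tau_i$: overlaps are open because $c^{\lambda\tau}$ and multiplication in $\vpjl G_i$ are continuous, and the overlap maps are smooth by Lemma~\ref{lemma:basic-C^k}. The remaining claims (split-compatibility of $\{\pi_i\}$, freeness and fiberwise transitivity of the action, and $\vpjl\tau_i$ being an equivariant diffeomorphism represented by the identity in your charts) go through as you describe.
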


If $\{G_i\}$ is strong, the continuity condition in the definition of
a strict principal atlas is automatic. Indeed, every compatible family
$\{c_i^{\lambda\tau}\}$ induces a continuous map into the 
inverse-limit topology of $\vpjl G_i$, which agrees with its atlas topology.
If, in addition, $\{M_i\}$ is strong, then the induced system
$\{P_i\}$ is strong, since its limit chart domains are products of
open limit base and group chart domains.

\section{Strict inverse limits of Lie groupoids}\label{sec:algd-gpd}
We work in the smooth category, although we note that Theorem~\ref{thm:inv-lim-gpd-structure} applies in the $k\in \N$ case as well.

An inverse limit of Lie algebroids $(A_i\to M_i, \Phi_{ij},f_{ij})$
is \Emph{strict} (resp. \Emph{strong})
if it is strict (resp. {strong}) as an inverse system of smooth vector bundles.
\begin{theorem}\label{thm:invlimit-algebroid}
   Let  $(A_i\to M_i, \Phi_{ij},f_{ij})$ be a strict inverse system of Lie algebroids. Then $\vpjl A_i\to \vpjl M_i$ 
   carries a unique Lie algebroid structure for which 
   every limit map $(\Phi_i,f_i)\colon \vpjl A_i\to A_i$ is a Lie algebroid
   morphism. Moreover, if $B\to N$ is a Lie algebroid and  $(\Psi,g)\colon B\to \vpjl A_i$ is a smooth vector bundle map, then
   $(\Psi,g)$
   is a Lie algebroid morphism if and only if for every $i$, $\Phi_i\circ \Psi$ is a Lie algebroid morphism over $f_i\circ g$.
\end{theorem}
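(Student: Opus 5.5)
The plan is to build the structure on $A:=\vpjl A_i\to M:=\vpjl M_i$ chart by chart, using \sis local trivializations. Fix one $\{\psi_i\colon A_i|_{U_i}\to U_i\times F_i\}$; by the discussion after the definition of strict systems of vector bundles, $\psi:=\vpjl\psi_i$ trivializes $A|_{U}$ with $U=\vpjl U_i$ and fiber $F=\vpjl F_i$.

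\emph{Anchor and structure functions.} The anchors $\anch_i$ are compatible, since each $\Phi_{ij}$ is a Lie algebroid morphism and dg-compatibility in degree $0$ gives $Tf_{ij}\circ\anch_j=\anch_i\circ\Phi_{ij}$. In the \sis charts $\{T\phi_i\}$ the anchors are given by compatible maps $U_i\times F_i\to M_{\phi_i}$, so Lemma~\ref{lemma:basic-C^k} yields a smooth bundle map $\anch=\vpjl\anch_i\colon A\to T\vpjl M_i\cong\vpjl TM_i$. Next I show that the structure maps $C_{\psi_i}$ are compatible, i.e. $\nu_{ij}\circ C_{\psi_j}=C_{\psi_i}\circ(f_{ij}\times\nu_{ij}\times\nu_{ij})$. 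To do this, apply the morphism condition to degree-one forms $\omega=\lambda\circ\pr_2\circ\psi_i$ with $\lambda\in F_i'$, and compare both sides of \eqref{eq:d_A}. The anchor terms match by compatibility of the anchors, and the remaining term is exactly $\lambda\circ C$. Since $F_i$ is Hausdorff locally convex, the functionals $\lambda$ separate points, which gives the compatibility. Lemma~\ref{lemma:basic-C^k} then provides a smooth $C_\psi:=\vpjl C_{\psi_i}$ on $U\times F\times F$, alternating and bilinear in the last two variables.

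\emph{Bracket.} For arbitrary local sections of $A$ I define the bracket by \eqref{eq:C-LA}, using $\anch$ and $C_\psi$. Independence of the trivialization is the step I expect to be the main obstacle, because general sections of $A$ need not descend from the levels. The way around this is to verify everything at the level of the structure functions. Transition maps $g=\vpjl g_i$ between two \sis trivializations are inverse limits of level transitions, and for each $i$ the functions $C_{\psi_i}$, $C_{\psi'_i}$ and $g_i$ satisfy the cocycle identity expressing bracket invariance. That identity involves only $\anch_i$, $C$ and the derivative of $g_i$, so it passes to the limit componentwise, using Lemma~\ref{lemma:basic-C^k} for the derivatives. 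In the same way the Jacobi identity is equivalent to a pointwise identity among $C_\psi$, $dC_\psi$ and $\anch$ (together with the anchor being a bracket homomorphism). This identity holds componentwise after projecting to each $F_i$, hence in $F$. The Leibniz rule is immediate from \eqref{eq:C-LA}. This makes $A$ a first-order Lie algebroid.

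\emph{Morphisms and uniqueness.} For the limit maps, I check $d_A\Phi_i^*\omega=\Phi_i^*d_{A_i}\omega$ locally in \sis trivializations. Both sides are computed from \eqref{eq:d_A} with extensions taken in the trivialization, so only $\anch$ and $C$ enter, and these project to $\anch_i$ and $C_{\psi_i}$. Uniqueness follows because any algebroid structure making all $\Phi_i$ morphisms must, by the same degree-$0$ and degree-$1$ test-form argument, have anchor projecting to $\anch_i$ and structure functions projecting to $C_{\psi_i}$; both are then determined, since points of $\vpjl$ are determined by their components.

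For the universal property, the direction ``only if'' is composition of morphisms. For ``if'', the key point is that the dg-condition for $\Psi$ only needs to be tested on a local generating family of forms. By the first-order description, the condition in a trivialization reduces to two identities: $\anch\circ\Psi=Tg\circ\anch_B$, and $C_\psi$ evaluated on $\Psi$ equals the transformed bracket of $B$. Both are identities valued in $\vpjl TM_i$ and in $F$, so they hold as soon as they hold after each projection, and that is precisely the hypothesis on $\Phi_i\circ\Psi$.
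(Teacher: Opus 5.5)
Your proposal is correct and follows essentially the same route as the paper. Both arguments obtain compatibility of the local anchors and structure maps $C_{\psi_i}$ from the dg condition on local linear functions and locally constant one-forms together with Hahn--Banach, use Lemma~\ref{lemma:basic-C^k} for smoothness of the limits, and define the bracket by the local first-order formula. In both, the change-of-trivialization and Jacobi identities pass to the limit componentwise, and the ``if'' direction reduces to the anchor and structure-function identities for $\Psi$, checked after projection to each level.

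Two points deserve tightening. First, general $k$-forms on an infinite-dimensional fiber are not algebraically generated by functions and constant one-forms, so ``testing on a generating family'' is not enough for the ``if'' direction; it needs the explicit chain-rule computation, as in the paper, showing that those two identities give $d_B\Psi^*\omega=\Psi^*d_A\omega$ for every $\omega$. Second, the first-order form \eqref{eq:C-LA} must also be verified in local trivializations that are not \sis ones; this follows from the same transition formula.
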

\begin{proof}
    Fix $(x_i)\in\vpjl M_i$ and let $\{\psi_i\colon A_i|_{U_i}\to U_i\times F_i\}$ be a \sis local trivialization overlapping 
    with a \sis chart about $(x_i)$. We suppress the chart and denote the modeling inverse system by $(M_{\psi_i},\epsij)$.
    Let $\anch_{\psi_i}\colon U_i\times F_i\to M_{\psi_i}$ and 
    $C_{\psi_i}\colon U_i\times F_i\times F_i\to F_i$ be the local representatives of the structure functions of $A_i$.
    Since $\Phi_{ij}$ is a Lie algebroid morphism and 
    it is locally given by $\nu_{ij}$ on the fiber, applying
    the dg condition to local linear functionals and locally constant one forms and  then using Hahn-Banach theorem, one gets 
    $\epsij\circ \anch_{\psi_j} = \anch_{\psi_i}\circ (f_{ij}\times \nu_{ij})$ and $\nu_{ij}\circ C_{\psi_j} = C_{\psi_i}\circ (f_{ij}\times \nu_{ij}\times \nu_{ij})$.
            Hence,
    $\vpjl \anch_{\psi_i}$ and
    $\vpjl C_{\psi_i}$  are
    smooth maps by Lemma~\ref{lemma:basic-C^k}.  
    Moreover, for all $y\in \vpjl U_i$, the map $(\vpjl C_{\psi_i})(y,\cdot,\cdot)$ is alternating bilinear since this 
    holds for $\nu_j\circ \vpjl C_{\psi_i}$, for every $j$.

    Set $U=\vpjl U_i$, $F=\vpjl F_i$, $\anch_\psi = \vpjl \anch_{\psi_i}$, and $C_\psi = \vpjl C_{\psi_i}$.
    For $\xi, \eta\in \Gamma((\vpjl A_i) |_U)$
    with representatives $\xi^\psi$ and $\eta^\psi$, set
    \begin{equation}\label{eq:Liebracketinv}
[\xi,\eta]^\psi(y) = d_1\eta^\psi(y)\anch_\psi(y,\xi^\psi(y)) - d_1\xi^\psi(y)\anch_\psi(y,\eta^\psi(y)) +  C_\psi(y, \xi^\psi(y), \eta^\psi(y)).
    \end{equation}
    This is smooth in $y$ and antisymmetric in $\xi$ and $\eta$. Let $\{\wt\psi_i\}$ be another \sis local trivialization about $(x_i)$. Write $\wt\psi_i\circ \psi_i^{-1} (y, v) = (y, T_i(y,v))$ where 
    the smooth map $T_i\colon (U_i\cap\wt U_i)\times F_i\to \wt F_i $ is linear in the second variable. The map $\vpjl T_i$ is smooth by Lemma~\ref{lemma:basic-C^k} and is linear in the second variable.
    Now for each $i$, the local structure maps $C_{\psi_i}$ and $C_{\wt\psi_i}$ are related by
    $$
    C_{\wt\psi_i}(z, T_i(z,v),T_i(z,w)) =  T_i(z, C_{\psi_i}(z,v,w))
    + d_1T_i(z,v)\anch_{\psi_i}(z,w) - d_1 T_i(z,w) \anch_{\psi_i}(z,v).
    $$
    Set $T=\vpjl T_i$.
    By taking the inverse limit and using the relations
    $\xi^{\wt\psi} (y) = T(y,\xi^\psi(y))$ (and similarly for $\eta$),
    we see that $[\xi,\eta]^{\wt\psi} (y) = T(y, [\xi,\eta]^\psi(y))$ 
    for all $y\in U$. That is, $[\xi,\eta]$ is a well-defined section of $A|_U$.
    Thus, $[\cdot,\cdot]_A\colon \shA\otimes_\R\shA\to \shA$ is a well-defined morphism of sheaves.
        Since every local trivialization is locally related to an \sis local trivialization by a smooth fiberwise linear transition map, evaluating the bracket on sections that are constant in the \sis trivialization gives the same change of trivialization formula for the local structure map. Hence the bracket has the form~\eqref{eq:Liebracketinv}, with a smooth structure map, in every local trivialization.

    For each $i$, the local structure maps $\anch_{\psi_i}$ and 
    $C_{\psi_i}$ satisfy
    \begin{equation}\label{eq:rho_id}
    d_1\anch_{\psi_i}(z,w)\anch_{\psi_i}(z,v) - d_1\anch_{\psi_i}(z,v)\anch_{\psi_i}(z,w) = \anch_{\psi_i}(z,C_{\psi_i}(z,v,w)),
    \end{equation}
    and
    \begin{equation}\label{eq:C_id}
    \sum_{\mathrm{cyc}(u,v,w)} \big(  
    d_1C_{\psi_i}(z,v,w)\anch_{\psi_i}(z,u)
+  C_{\psi_i}(z,u, C_{\psi_i}(z,v,w))
    \big) = 0,
    \end{equation}
    for $z\in U_i$ and $u,v,w\in F_i$.
          The same equations hold when we pass to the limit.
    Set $X(y) = \anch_{\psi}(y, \xi^\psi(y))$
    and $Y(y) = \anch_{\psi}(y, \eta^\psi(y))$. 
    By the chain rule and fiberwise linearity of $\anch_\psi$, one computes
    $dY(y)X(y) = d_1\anch_{\psi}(y,\eta^\psi(y))X(y) + \anch_\psi(y, 
    d\eta^\psi(y)X(y))$, and similarly for 
    $dX(y)Y(y)$. Now subtracting and using~\eqref{eq:rho_id}, we get
    $[X,Y](y) = \anch_\psi(y, [\xi,\eta]^\psi(y))$. That is,
    $\anch([\xi,\eta]) = [\anch(\xi),\anch(\eta)]$.
For $f\in C^\infty(U)$, using~\eqref{eq:Liebracketinv}, we see that 
$[\xi,f\eta] = f[\xi,\eta] + (\anch(\xi).f)\eta$. Let $\zeta\in \Gamma(A|_U)$. Expand $[\xi,[\eta,\zeta]]+[\eta,[\zeta,\xi]]+[\zeta,[\xi,\eta]]$ using~\eqref{eq:Liebracketinv}. After simplifying using symmetry of the second derivative, we get
\[
\sum_{\mathrm{cyc}(\xi,\eta,\zeta)} \big( 
d_1 C_\psi(y, \eta^\psi(y), \zeta^\psi(y))\anch_{\psi}(y,\xi^\psi(y)) + C_{\psi}(y,\xi^\psi(y), C_\psi(y,\eta^\psi(y),\zeta^\psi(y)))
\big) 
\]
which vanishes for all $y\in U$. Thus, $\vpjl A_i\to \vpjl M_i$ is a Lie algebroid.

Let $V\subseteq M_j$ be open and let $\omega\in \Omega^k_{A_j}(V)$.
Let $\{\psi_i\}$ be a \sis local trivialization such that $U:=\vpjl U_i\subseteq f_j^{-1}(V)$.
Now   $(\Phi_j^* \omega)_{\psi}(y; u_1,\ldots, u_k)
= \omega_{\psi_j}(f_j (y); \nu_j (u_1), \ldots, \nu_j (u_k))$.  Substituting   in~\eqref{eq:d_A}, then  from the definition of $\anch_\psi$ and $C_\psi$ it follows that $d_A(\Phi_j^*\omega) = \Phi^*(d_{A_j}\omega)$.

Now suppose that the vector bundle $\vpjl A_i\to \vpjl M_i$ admits another Lie algebroid structure for which each $\Phi_i$ is a Lie algebroid morphism. Then applying the dg map conditions to functions and locally constant one-forms gives uniqueness. Indeed,
locally in $\{\psi_i\}$, we have $\epsilon_j \circ \wt \anch_\psi =
\anch_{\psi_j}\circ (\epsilon_j\times \nu_j) =\epsilon_j \circ  \anch_\psi $ 
and $\nu_j\circ \wt C_{\psi} = C_{\psi_j}\circ (\epsilon_j\times \nu_j\times \nu_j)=\nu_j\circ C_{\psi}$,
for every $j$. Thus, 
$\wt\anch_\psi=\anch_\psi$ and
$\wt C_\psi = C_\psi$.

For the final statement, the forward direction is clear. Suppose that
$(\Phi_i\circ\Psi,f_i\circ g)$ is a Lie algebroid morphism for every
$i$. Let $\tau\colon B|_W\to W\times H$
be a local trivialization about $z\in N$, and let $\{\psi_i\}$ be a
\sis local trivialization about $g(z)$. Let
$\anch^B_\tau$ and $C^B_\tau$ denote the local structure maps of $B$.
Write $
    (\psi_i\circ(\Phi_i\circ\Psi)\circ\tau^{-1})(z,w)=(f_i(g(z)),K_i(z,w)),$
where $K_i$ is smooth and linear in the second variable. The maps
$K_i$ are compatible. Set $K=\vpjl K_i $. Then 
 $(\psi\circ\Psi\circ\tau^{-1})(z,w)=(g(z),K(z,w)).$

Since $(\Phi_i\circ\Psi,f_i\circ g)$ is a Lie algebroid morphism, 
   $ d_B(\Phi_i\circ\Psi)^*\omega_i
    =
    (\Phi_i\circ\Psi)^* d_{A_i}\omega_i$
for every local form $\omega_i$ on $A_i$. Applying this  to
functions and using \eqref{eq:d_A}, we get $
    \epsilon_i\bigl(
        dg(z)\anch^B_\tau(z,w)
    \bigr)
    =
    \anch_{\psi_i}
    \bigl(f_i(g(z)),K_i(z,w)\bigr).$
On the other hand, $
    \anch_{\psi_i}
(f_i(g(z)),K_i(z,w))
    =
    \epsilon_i(
        \anch_\psi(g(z),K(z,w))
    ).$
Hence
\begin{equation}\label{eq:limit-morphism-anchor}
    dg(z)\anch^B_\tau(z,w)
    =
    \anch_\psi(g(z),K(z,w)).
\end{equation}

Next, we consider the pullback of a locally constant one-form in the
trivialization $\psi_i$. Using \eqref{eq:d_A}, and the Hahn-Banach theorem, we obtain
\[
\begin{aligned}
    &C_{\psi_i}
    \bigl(f_i(g(z)),K_i(z,v),K_i(z,w)\bigr)\\
    &\qquad =
    K_i\bigl(z,C^B_\tau(z,v,w)\bigr)
    +d_1K_i(z,v)\anch^B_\tau(z,w)
    -d_1K_i(z,w)\anch^B_\tau(z,v).
\end{aligned}
\]
Now 
$C_{\psi_i}
    (f_i(g(z)),K_i(z,v),K_i(z,w)) =
    \nu_i (
    C_\psi(g(z),K(z,v),K(z,w))
    ).$
Hence, \begin{equation}\label{eq:limit-morphism-C}
\begin{aligned}
    C_\psi(g(z),K(z,v),K(z,w))
    &=
    K\bigl(z,C^B_\tau(z,v,w)\bigr)\\
    &\quad
    +d_1K(z,v)\anch^B_\tau(z,w)
    -d_1K(z,w)\anch^B_\tau(z,v).
\end{aligned}
\end{equation}

Let $\omega\in\Omega_{\vpjl A}^k(U)$, let $z\in W$, and let
$w_0,\ldots,w_k\in H$.  Then
\[
    (\Psi^*\omega)_\tau
    (z;w_1,\ldots,w_k)
    =
    \omega_\psi
    \bigl(g(z);K(z,w_1),\ldots,K(z,w_k)\bigr).
\]

Set $
    u_r:=K(z,w_r)$ and $
    X_r:=\anch^B_\tau(z,w_r).$
Then, by the chain rule,
\[
\begin{aligned}
&d_1(\Psi^*\omega)_\tau
 (z;w_0,\ldots,\widehat{w_a},\ldots,w_k)X_a\\
&\quad =
d_1\omega_\psi
 \bigl(g(z);u_0,\ldots,\widehat{u_a},\ldots,u_k\bigr)
 dg(z)X_a\\
&\qquad
+\sum_{r\neq a}
 \omega_\psi\bigl(
 g(z);
 u_0,\ldots,\widehat{u_a},\ldots,
 d_1K(z,w_r)X_a,\ldots,u_k
 \bigr).
\end{aligned}
\]
Using the \eqref{eq:d_A} for $d_B$ and
\eqref{eq:limit-morphism-anchor}, the $d_1\omega_\psi$ terms 
 become
\[
\sum_{a=0}^k(-1)^a
d_1\omega_\psi
 \bigl(g(z);u_0,\ldots,\widehat{u_a},\ldots,u_k\bigr)
 \anch_\psi(g(z),u_a).
\]
Since $\omega_\psi$ is alternating, the remaining terms containing
$d_1K$ and  the $C^B_\tau$ terms sum to
\[
\begin{aligned}
\sum_{0\leq a<b\leq k}(-1)^{a+b}
\omega_\psi\bigl(
g(z);
&K(z,C^B_\tau(z,w_a,w_b))
+d_1K(z,w_a)X_b-d_1K(z,w_b)X_a,\\
&u_0,\ldots,\widehat{u_a},\ldots,
\widehat{u_b},\ldots,u_k
\bigr).
\end{aligned}
\]
By~\eqref{eq:limit-morphism-C}, this is
\[
\sum_{0\leq a<b\leq k}(-1)^{a+b}
\omega_\psi\bigl(
g(z);
C_\psi(g(z),u_a,u_b),
u_0,\ldots,\widehat{u_a},\ldots,
\widehat{u_b},\ldots,u_k
\bigr).
\]
Hence
\[
\begin{aligned}
(d_B\Psi^*\omega)_\tau(z;w_0,\ldots,w_k)
&=(d_A\omega)_\psi(g(z);u_0,\ldots,u_k)=(\Psi^* d_A\omega)_\tau(z;w_0,\ldots,w_k).
\end{aligned}
\]
Hence $d_B\circ\Psi^\sharp=\Psi^\sharp\circ g^{-1} d_A$. Thus,
$(\Psi,g)$ is a Lie algebroid morphism.
\end{proof}

For a discussion of inverse systems of Lie algebroids in the
convenient settings; see~\cite{CP23}.

\begin{definition} An inverse system of Lie groupoids $(\G_i\rightrightarrows M_i, \F_{ij}, f_{ij} )$ is \Emph{strict} if:
\begin{enumerate}
    \item $(\G_i, \F_{ij})$ and $(M_i, f_{ij})$ are strict inverse
    systems of manifolds;
    \item $\{\src_i\}$ is a split-compatible family of submersions;
    \item $\{\mul_i\}$ and $\{\inv_i\}$ are morphisms of 
    strict 
    inverse systems, where $\{\mset{\G_i}\}$ carries the split strict inverse system of submanifolds structure of Proposition~\ref{prop:submersions}.
\end{enumerate}
The system is \Emph{strong} if $\{\G_i\}$ is strong. 
\end{definition}
The second condition can equivalently be  written in terms of  $\{\tgt_i\}$.
The third condition is redundant if the system is strong
 since $\vpjl \mul_i$ and $\vpjl \inv_i$ are automatically 
continuous. Finally, we note that $\{M_i\}$ is strong if $\{\G_i\}$ is strong. Indeed, one can start with split-compatible submersion charts for $\{\src_i\}$ and show that $\{\one_i\}$ is a split-compatible family of embeddings then use Proposition~\ref{prop:immersions}.

\begin{theorem}\label{thm:inv-lim-gpd-structure}
    Let $(\G_i\rightrightarrows M_i, \F_{ij}, f_{ij} )$
    be  a strict inverse system of Lie groupoids. Then
    $\vpjl \G_i \rightrightarrows \vpjl M_i$ carries a unique
    Lie groupoid structure defined by the maps 
    $\vpjl \src_i$, $\vpjl \tgt_i$, $\vpjl \mul_i$, $\vpjl \one_i$, and $\vpjl \inv_i$.
\end{theorem}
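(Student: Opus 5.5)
The plan is to first check that the limit maps define a groupoid structure on sets, and then upgrade each structure map to the required smoothness using the results of Section~\ref{sec:mfds_bundles}. Algebraically, $\vpjl\G_i\rightrightarrows\vpjl M_i$ is a groupoid with componentwise operations. The maps $\F_{ij}$ are groupoid morphisms over $f_{ij}$, so the families $\{\src_i\},\{\tgt_i\},\{\mul_i\},\{\one_i\},\{\inv_i\}$ are compatible and induce the maps $\vpjl\src_i$, etc. The groupoid axioms (associativity, unit and inverse laws, $\src\circ\mul=\src\circ\pr_2$, and so on) hold componentwise, hence in the limit. Uniqueness is immediate, since the structure maps are prescribed and the manifold structures come from the selected strict atlases.

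For the smooth structure, Proposition~\ref{prop:submersions}\ref{item:sub-prop-item1} applied to the split-compatible family $\{\src_i\}$ shows that $\vpjl\src_i$ is a smooth submersion. Next, apply Proposition~\ref{prop:submersions}\ref{item:sub-prop-item4} with $M_i=\G_i$, $\wt M_i=\G_i$, $f_i=\src_i$ and $\wt f_i=\tgt_i$; this needs $\{\tgt_i\}$ to be a morphism of strict systems, which is discussed below. It gives that $\{\mset{\G_i}\}$ is a split strict system of submanifolds of $\{\G_i\times\G_i\}$, and that
\[
\mset{(\vpjl\G_i)}=(\vpjl\G_i)\times_{\vpjl M_i}(\vpjl\G_i)\cong\vpjl\mset{\G_i}
\]
as split submanifolds of $\vpjl\G_i\times\vpjl\G_i\cong\vpjl(\G_i\times\G_i)$. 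By hypothesis (3), $\{\mul_i\}$ and $\{\inv_i\}$ are morphisms of strict inverse systems, so their limits are continuous for the atlas topologies. In \sis charts they are limits of compatible smooth local representatives, so Lemma~\ref{lemma:basic-C^k} makes $\vpjl\mul_i$ and $\vpjl\inv_i$ smooth. The remaining properties then follow from the general remarks in the introduction: $\tgt=\src\circ\inv$ is a submersion and $\one$ is a smooth embedding.

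The main obstacle is the fiber product step, in two places. First, Proposition~\ref{prop:submersions}\ref{item:sub-prop-item4} needs $\{\tgt_i\}$ to be a morphism of strict systems, meaning $\vpjl\tgt_i$ must be continuous for the atlas topologies. I would get this by writing $\tgt_i=\src_i\circ\inv_i$: the limit $\vpjl\inv_i$ is continuous by hypothesis (3), and $\vpjl\src_i$ is smooth, hence continuous. Second, the identification $\mset{(\vpjl\G_i)}\cong\vpjl\mset{\G_i}$ must hold as manifolds, not only as sets, and the topology on $\vpjl\mset{\G_i}$ coming from its \sis submanifold charts must match the one used in the definition of a strict system of groupoids. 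Both are exactly the content of item~4(c) together with the explicit charts $(u,c,z)\mapsto(u-r_i(z),c,z)$. Finally, for the case $k\in\N$ mentioned before the theorem, the same argument applies with smooth replaced by $C^k$, since Lemma~\ref{lemma:basic-C^k} and Proposition~\ref{prop:submersions} are stated for $C^k$.
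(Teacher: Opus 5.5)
Your proposal is correct and follows essentially the same route as the paper. Both arguments get the submersion property of $\vpjl\src_i$ from Proposition~\ref{prop:submersions}, get smoothness of $\vpjl\mul_i$ through $\mset{(\vpjl\G_i)}\cong\vpjl\mset{\G_i}$, obtain $\tgt$ and the unit map by the introduction's argument (which the paper repeats explicitly), and check the axioms and uniqueness componentwise; your check that $\{\tgt_i\}=\{\src_i\circ\inv_i\}$ is a morphism of strict systems, needed to invoke item~\ref{item:sub-prop-item4}, is a detail the paper leaves implicit.
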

\begin{proof}
    By Proposition~\ref{prop:submersions}, $\vpjl \src_i$
    is a submersion. Since $\vpjl \inv_i$ is smooth and $(\vpjl \inv_i)^2 = \id$, it is a diffeomorphism. Hence $\vpjl \tgt_i=\vpjl \src_i\circ\vpjl \inv_i$ is a submersion.
    The map $\vpjl \mul_i$ is smooth and defines smooth multiplication since $\mset{(\vpjl \G_i)}\cong \vpjl (\mset{\G_i})  $ by Proposition~\ref{prop:submersions}.
    Fix $(x_i)\in\vpjl M_i$.     Take a local section $\sigma\colon U\to \vpjl \G_i$
    of $\vpjl \src_i$ through $(1_{x_i})$. Then $(\vpjl \one_i)|_{U} = (\vpjl \mul_i)\circ ( (\vpjl \inv_i)\circ \sigma, \sigma) $
    is smooth. This shows that $\vpjl \one_i$ is smooth.
    Since $\vpjl \one_i$ is a section of $\vpjl \src_i$, it is a smooth embedding.
Composition with the limit maps shows that the
groupoid axioms hold and that the groupoid structure is unique.
    \end{proof}

\begin{theorem}\label{thm:inv-lie-groupoid}
         Let $(\G_i\rightrightarrows M_i, \F_{ij}, f_{ij} )$
    be  a strict inverse system of Lie groupoids.
    Then $(\Lie(\G_i)\to M_i, \Lie(\F_{ij}), f_{ij})$
    is a strict inverse system of Lie algebroids. Moreover, we have a
    canonical Lie algebroid isomorphism $\Lie(\vpjl \G_i)\cong \vpjl \Lie(\G_i)$.
\end{theorem}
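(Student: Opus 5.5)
The plan has three steps: realize each $\Lie(\G_i)$ as a restricted kernel bundle, get strictness of the algebroid system from Lemma~\ref{lemma:kernel-subbundle}, and then match the two Lie algebroid structures on the limit through the uniqueness in Theorem~\ref{thm:invlimit-algebroid}.

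\textbf{Step 1: the kernels form a split strict system.} Since $\{\src_i\}$ is a split-compatible family of submersions, Lemma~\ref{lemma:kernel-subbundle} shows that $\{\ker(T\src_i)\}$ is a split strict inverse system of subbundles of $\{T\G_i\to\G_i\}$. It also gives
\[
\ker\Bigl(T\vpjl\src_i\Bigr)\cong\vpjl\ker(T\src_i),
\]
where we use the identification $T\vpjl\G_i\cong\vpjl T\G_i$ and the strict atlas $T\cA$. The connecting maps of the kernel system are $T\F_{ij}$, since $\src_i\circ\F_{ij}=f_{ij}\circ\src_j$ gives $T\F_{ij}(\ker T\src_j)\subseteq\ker T\src_i$.

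\textbf{Step 2: restrict to the units.} By the remark after the definition of strict groupoid systems, $\{\one_i\}$ is a split-compatible family of embeddings, and it is a morphism of strict systems. Lemma~\ref{lemma:pullback-sum-bundles}(2) then makes $\{\one_i^*\ker(T\src_i)\}=\{\Lie(\G_i)\}$ a strict inverse system of vector bundles over $\{M_i\}$, with connecting maps $\Lie(\F_{ij})=T\F_{ij}|_{\Lie(\G_j)}$. It also gives
\[
\vpjl\Lie(\G_i)\cong\Bigl(\vpjl\one_i\Bigr)^*\vpjl\ker(T\src_i)\cong\Lie\Bigl(\vpjl\G_i\Bigr),
\]
using Step 1 and Theorem~\ref{thm:inv-lim-gpd-structure}. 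Each $\Lie(\F_{ij})$ is a Lie algebroid morphism by functoriality of $\Lie$, so $(\Lie(\G_i),\Lie(\F_{ij}),f_{ij})$ is a strict inverse system of Lie algebroids.

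To be concrete about the local trivializations, take split-compatible \sis submersion charts $\phi_i\colon U_i\to N_{\psi_i}\times C_i$ for $\src_i$ around $(1_{x_i})$. Then $T\phi_i$ trivializes $\ker T\src_i$ as $U_i\times C_i$, and restricting along $\one_i$ gives \sis local trivializations of $\Lie(\G_i)$ with fibres $\{C_i\}$. Their limit domains are open because $\vpjl\one_i$ is continuous for the atlas topologies.

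\textbf{Step 3: compare the algebroid structures.} Let $\Psi\colon\Lie(\vpjl\G_i)\to\vpjl\Lie(\G_i)$ be the vector bundle isomorphism from Step 2. Composing with the limit maps gives $\Phi_i\circ\Psi=\Lie(\F_i)$, where $\F_i\colon\vpjl\G_i\to\G_i$ is the limit map. The map $\F_i$ is a Lie groupoid morphism by Theorem~\ref{thm:inv-lim-gpd-structure}, so $\Phi_i\circ\Psi$ is a Lie algebroid morphism for every $i$. The final statement of Theorem~\ref{thm:invlimit-algebroid} then shows that $\Psi$ is a Lie algebroid morphism.

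The inverse $\Psi^{-1}$ also needs to be a Lie algebroid morphism. Since $\Psi$ is a vector bundle isomorphism over $\id$, transporting the structure of $\Lie(\vpjl\G_i)$ along $\Psi$ gives a Lie algebroid structure on $\vpjl\Lie(\G_i)$ for which every $\Phi_i$ is a morphism. By the uniqueness part of Theorem~\ref{thm:invlimit-algebroid}, this transported structure equals the limit structure, so $\Psi$ is an isomorphism of Lie algebroids.

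\textbf{Main obstacle.} The delicate point is Step 2. One has to verify that the restriction to units of the kernel system really produces \sis trivializations whose limit domains are open in the atlas topology of $\vpjl M_i$. One also has to check that the identification of $\ker T\vpjl\src_i$ along $\vpjl\one_i$ is compatible with $T\vpjl\G_i\cong\vpjl T\G_i$. I would handle both by working directly in the split-compatible submersion charts, where everything is the constant complemented system $\{C_i\}$.
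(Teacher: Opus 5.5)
Your proposal is correct and follows essentially the same route as the paper. Lemma~\ref{lemma:kernel-subbundle} together with Lemma~\ref{lemma:pullback-sum-bundles} (pulling back along the units) gives strictness of $\{\Lie(\G_i)\}$ and the canonical vector bundle isomorphism. The final part of Theorem~\ref{thm:invlimit-algebroid}, applied to $\Phi_i\circ\Psi=\Lie(\F_i)$, then makes that isomorphism a Lie algebroid morphism.

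The only difference is in how you handle $\Psi^{-1}$. You use the uniqueness clause, whereas the paper simply notes that a Lie algebroid morphism which is a vector bundle isomorphism over the identity is automatically an isomorphism: its pullback on forms is invertible and commutes with the differentials.
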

\begin{proof}
    Lemmas~\ref{lemma:kernel-subbundle} and
    \ref{lemma:pullback-sum-bundles}, and 
    \cite[Corollary~3.7]{Sha26}, show that
    $\{\Lie(\G_i)\}$ is a strict inverse system of Lie algebroids and give
    a canonical vector bundle isomorphism
    $\Xi\colon \Lie(\vpjl \G_i)\to \vpjl \Lie(\G_i).$
        Since
    $\Phi_j\circ\Xi=\Lie(\F_j)$, and each $\Lie(\F_j)$ is a Lie
    algebroid morphism, the final part of
    Theorem~\ref{thm:invlimit-algebroid} shows that $\Xi$ is a Lie
    algebroid morphism. Since $\Xi$ is a vector bundle isomorphism, it is
    a Lie algebroid isomorphism.
\end{proof}

\paragraph{Functoriality}
Denote by
$\SInvMan_I^k$ the category of strict $C^k$-inverse systems of manifolds, by
 $\SInvVBun^k_I$ the category of strict $C^k$-inverse systems of vector bundles, by $\SInvLieGpd_I$ the category of strict inverse systems of Lie groupoids, and by $\SInvLieAlg_I$ the category of strict inverse systems of Lie algebroids. The results in Section~\ref{sec:mfds_bundles} show that
inverse limits
define a functor
$\vpjl\colon \SInvMan_I^k\to\Man^k$.
Finite products exist in $\SInvMan_I^k$ and are preserved by $\vpjl$.
Moreover, if two morphisms of strict inverse systems have a common target and
one of them is a split-compatible family of submersions, then their levelwise
pullbacks form a strict inverse system and the canonical map
$\vpjl(M_i\times_{N_i}\wt M_i)\to
(\vpjl M_i)\times_{\vpjl N_i}(\vpjl\wt M_i)$
is a $C^k$-diffeomorphism. We will use this restricted pullback statement for
the manifolds of composable arrows of Lie groupoids.

The analogous constructions for vector bundles are also compatible with
inverse limits. In particular, pullback of a strict inverse system of vector
bundles along a morphism of strict inverse systems commutes canonically with
$\vpjl$, as do fiberwise direct sums. The tangent construction
is functorial on strict inverse systems and gives a functor
$T\colon\SInvMan_I^k\to\SInvVBun_I^{k-1}$. The canonical identification
$T(\vpjl M_i)\cong\vpjl TM_i$ is natural. 
For $k\geq2$, the tangent functor preserves the restricted pullbacks of strict inverse systems of manifolds described above.

Applying the Lie functor levelwise defines a functor
$\Lie_I\colon\SInvLieGpd_I\to\SInvLieAlg_I$. For a strict inverse system
$\{\G_i\rightrightarrows M_i\}$, let
$\Xi_{\G}\colon \Lie(\vpjl \G_i)\to\vpjl \Lie(\G_i)$ denote the canonical
isomorphism constructed above. If
$\{\mathcal F_i\}\colon\{\G_i\}\to\{\mathcal H_i\}$ is a morphism of strict inverse
systems of Lie groupoids, then after composing with limit maps, we can show that
$\Xi_{\mathcal H}\circ \Lie(\vpjl \mathcal F_i)
=(\vpjl \Lie(\mathcal F_i))\circ\Xi_{\mathcal G}$.
Hence the family $\{\Xi_{\G}\}$ is natural, and there is a natural
isomorphism
$\Lie\circ\vpjl\cong\vpjl\circ \Lie_I$.

The integration results below should not be interpreted as providing an
inverse functor to $\Lie_I$. Starting from an inverse system of integrable
Banach-Lie algebroids, source-simply connected integrations and the
connecting groupoid morphisms are determined levelwise by Lie II. 
However, Lemma~\ref{lemma:levelwise-integrations} only produces an inverse system of Banach-Lie groupoids. Indeed, additional geometric hypotheses are required in Theorem~\ref{thm:integration-inverse-system} to ensure that this inverse system is strict.
Even after
a strict inverse-limit integration has been obtained, source-simply
connectedness of the limit is a separate homotopy issue governed
for the inverse sequences considered below by the derived inverse limit
$\vpjl^1\pi_2$ of the source fibers.

\section{Integration of inverse systems of integrable Banach-Lie algebroids}\label{sec:integration}

We first construct the source-simply connected integrations of an inverse
system of integrable Banach-Lie algebroids. The subsequent atlas
criteria apply to any compatible system of Banach-Lie groupoid
integrations. Source-simply connectedness of the limit is a separate
question. Throughout,
base manifolds and source fibers are Hausdorff; arrow manifolds need not
be Hausdorff. An integrable Banach-Lie algebroid
is one admitting a Banach-Lie groupoid integration with Hausdorff source
fibers. Simply connected means path connected with trivial fundamental
group.

\begin{lemma}\label{lemma:levelwise-integrations}
Let $(A_i\to M_i,\Phi_{ij},f_{ij})$ be an inverse system of integrable
Banach-Lie algebroids. Then there is an inverse system
$(\G_i\rightrightarrows M_i,\F_{ij},f_{ij})$ of source-simply connected
Banach-Lie groupoids such that $\Lie(\G_i)\cong A_i$ and, under these
identifications, $\Lie(\F_{ij})=\Phi_{ij}$.
\end{lemma}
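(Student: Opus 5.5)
The plan is to build the groupoids level by level and then obtain the connecting morphisms from Lie II, using its uniqueness clause to get functoriality. For each $i$, $A_i$ is integrable, so it has a Banach-Lie groupoid integration $\mathcal H_i\rightrightarrows M_i$ with Hausdorff source fibers. Let $\G_i\rightrightarrows M_i$ be the source-simply connected integration of $A_i$. I would construct it as the source-simply connected cover of the source-connected component of $\mathcal H_i$. Equivalently, one can take the Weinstein groupoid of $A_i$, that is, $A_i$-paths modulo $A_i$-homotopy, and observe that the existence of $\mathcal H_i$ gives it a smooth Banach structure. In either description the identification $\Lie(\G_i)\cong A_i$ comes from the standard construction. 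Its compatibility with the bracket is part of the Banach Lie I/III theory, which I would quote from the literature, for instance \cite{Sha26} or the Banach integration references, rather than reprove. Fix these identifications once and for all.

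For the connecting morphisms, fix $i\leq j$. The map $\Phi_{ij}\colon A_j\to A_i$ is a Lie algebroid morphism over $f_{ij}$, and $\G_j$ is source-simply connected. I would then invoke Lie II in the Banach setting. It says that a Lie algebroid morphism from the algebroid of a source-simply connected groupoid into the algebroid of any Lie groupoid integrates to a unique Lie groupoid morphism. Applied here, it gives a unique morphism $\F_{ij}\colon\G_j\to\G_i$ over $f_{ij}$ with $\Lie(\F_{ij})=\Phi_{ij}$. The proof of Lie II I have in mind integrates $A_j$-paths: an $A_j$-path $a$ is sent to $\Phi_{ij}\circ a$, which is an $A_i$-path, and $A_j$-homotopies are likewise sent to $A_i$-homotopies. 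Smoothness then follows from local source charts, that is, from the smooth dependence of solutions of the path ODE in Banach spaces.

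It remains to check functoriality, which comes from uniqueness. For $i\leq j\leq k$, both $\F_{ij}\circ\F_{jk}$ and $\F_{ik}$ are Lie groupoid morphisms $\G_k\to\G_i$ over $f_{ik}$. Their Lie functors agree: $\Lie(\F_{ij}\circ\F_{jk})=\Lie(\F_{ij})\circ\Lie(\F_{jk})=\Phi_{ij}\circ\Phi_{jk}=\Phi_{ik}$, using functoriality of $\Lie$ and the fact that $(A_i,\Phi_{ij})$ is an inverse system. Since $\G_k$ is source-simply connected, the uniqueness in Lie II gives $\F_{ij}\circ\F_{jk}=\F_{ik}$. The same argument with $\Lie(\id_{\G_i})=\id_{A_i}$ gives $\F_{ii}=\id$. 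Together these make $(\G_i,\F_{ij},f_{ij})$ a contravariant functor from $I$, i.e.\ an inverse system.

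I expect the main obstacle to be having Lie II available in the Banach setting with possibly non-Hausdorff arrow manifolds. The uniqueness part is comparatively straightforward. A morphism out of a source-connected groupoid is determined by its values on a neighborhood of the units, since the groupoid is generated by that neighborhood. Near the units the morphism is determined by its differential, because right-invariant flows are intertwined. Existence requires the path-space construction together with smooth dependence on parameters. I would cite it if available, and otherwise sketch it via the graph trick. In that approach one considers the graph of $\Phi_{ij}$ as a subalgebroid of $A_i\times A_j$. One integrates it to a source-simply connected, immersed subgroupoid of $\G_i\times\G_j$. One then checks that the projection of this subgroupoid to $\G_j$ is an isomorphism, using source-simple connectedness of $\G_j$, and finally composes the inverse of that isomorphism with the projection to $\G_i$.
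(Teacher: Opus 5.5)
Your proposal is correct and follows the paper's proof. Both take levelwise source-simply connected integrations obtained by covering the source fibers of the source-connected subgroupoid of a given integration, integrate each $\Phi_{ij}$ to a unique $\F_{ij}$ over $f_{ij}$ by Lie II, and use uniqueness in Lie II to get $\F_{ii}=\id$ and $\F_{ik}=\F_{ij}\circ\F_{jk}$. The paper additionally notes that this covering construction keeps the source fibers Hausdorff, as its standing conventions require, and since Banach Lie II is available to cite, your Weinstein-groupoid and graph-trick detours are not needed.
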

\begin{proof}
By~\cite[Theorem~5.1]{BGJP19}, each $A_i$ admits a source-simply
connected integration $\G_i$. Indeed, starting from an integration with
Hausdorff source fibers, the construction takes universal coverings of
the source fibers of its source-connected subgroupoid, so these fibers
remain Hausdorff. Lie II~\cite[Theorem~4.8]{Sha26} integrates $\Phi_{ij}$
to a unique morphism $\F_{ij}\colon\G_j\to\G_i$ over $f_{ij}$.
Uniqueness gives $\F_{ii}=\id$ and $\F_{ik}=\F_{ij}\circ\F_{jk}$.
\end{proof}

The next two lemmas construct compatible source charts from compatible
algebroid trivializations. Source-simply connectedness is not needed for
this construction.

\begin{lemma}\label{lemma:normalized-source-chart}
Let $\G\rightrightarrows M$ be a Banach-Lie groupoid and
$\psi\colon\Lie(\G)|_{U_\psi}\to U_\psi\times F$ a local
trivialization about $x\in M$. There are open neighborhoods
$U\subseteq U_\psi$ of $x$ and $W\subseteq F$ of $0$, and a
diffeomorphism $\Theta\colon U\times W\to\Omega$ onto an open
neighborhood of $\one(U)$ such that $\Theta(y,0)=1_y$,
$\src\circ\Theta=\pr_1$,
$d_2\Theta(y,0)v=\psi_y^{-1}(v)$, and
$\tgt(\Omega)\subseteq U_\psi$.
\end{lemma}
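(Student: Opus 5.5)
We build $\Theta$ from right-invariant vector fields and their flows, in the standard way, using that Banach-Lie groupoids admit ODE theory. A section $v$ of $\Lie(\G)$ determines the right-invariant $\src$-vertical field $\vec v(g)=T_{1_{\tgt(g)}}R_g\,v(\tgt(g))$ on $\G$. For $(y,w)\in U_\psi\times F$, let $\xi_w$ be the local section whose representative in $\psi$ is constantly $w$, so $\xi_w(z)=\psi_z^{-1}(w)$. Define
\[
\Theta(y,w):=\Fl^{\vec\xi_w}_1(1_y),
\]
the time-one flow of the right-invariant field starting at the unit $1_y$.

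Since $\vec\xi_w$ is tangent to the source fibers, $\src\circ\Theta=\pr_1$. Because $\xi_0=0$, we get $\Theta(y,0)=1_y$. Smoothness in $(y,w)$ is a statement about parameter-dependent ODEs. We treat the family as the single smooth vector field $(w,g)\mapsto(0,\vec\xi_w(g))$ on $F\times\src^{-1}(U_\psi)$, which is legitimate because it depends smoothly and linearly on $w$. Banach ODE theory then gives a smooth local flow on an open set containing $\{0\}\times\one(U_\psi)\times\{0\}$. By scaling, $\Fl^{\vec\xi_w}_1=\Fl^{\vec\xi_{w/t}}_t$, so the time-one map is defined for all $w$ in a neighborhood of $0$. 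Concretely, for each $x$ there are open $U'\ni x$ and $W'\ni 0$ on which $\Theta$ is defined and smooth. The same scaling gives $\Theta(y,tw)=\Fl^{\vec\xi_w}_t(1_y)$, and differentiating at $t=0$ yields $d_2\Theta(y,0)w=\vec\xi_w(1_y)=\xi_w(y)=\psi_y^{-1}(w)$.

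To get a diffeomorphism we apply the inverse function theorem at $(x,0)$. The differential there is
\[
(u,w)\mapsto T\one(u)+\psi_x^{-1}(w).
\]
Since $T_{1_x}\G=T\one(T_xM)\oplus\ker T_{1_x}\src$ and $\psi_x^{-1}$ is a topological isomorphism onto $\Lie(\G)_x=\ker T_{1_x}\src$, this differential is a topological isomorphism. Hence $\Theta$ is a local diffeomorphism near each point $(y,0)$.

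The main obstacle is upgrading these local diffeomorphisms to injectivity on a whole product $U\times W$. Non-Hausdorff arrow manifolds make the usual tubular-neighborhood argument delicate, so we avoid it. We shrink to a single product: choose $U\subseteq U'$ and $W\subseteq W'$ such that $\Theta|_{U\times W}$ lies inside one inverse-function-theorem neighborhood around $(x,0)$. On that neighborhood $\Theta$ is already injective with open image, so no global injectivity argument is needed. The lemma only asks for a neighborhood $U$ of $x$, so this single local chart suffices, and $\Omega:=\Theta(U\times W)$ contains $\one(U)$.

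Finally, for $\tgt(\Omega)\subseteq U_\psi$ we use continuity of $\tgt\circ\Theta$ together with $\tgt(\Theta(y,0))=y\in U_\psi$. After shrinking $U$ and $W$ further, this gives $\tgt(\Theta(U\times W))\subseteq U_\psi$. The flow lines stay in $\tgt^{-1}(U_\psi)$ anyway, because $\vec\xi_w$ is defined only there, so this condition is automatic.
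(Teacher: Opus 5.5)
Your proposal is correct and is essentially the paper's proof: you take time-one flows of the right-invariant extensions of the sections $\xi_w=\psi^{-1}(\cdot,w)$ started at units, use the scaling $\vec\xi_{tw}=t\vec\xi_w$ to get $\Theta(y,0)=1_y$ and $d_2\Theta(y,0)w=\psi_y^{-1}(w)$, then apply the inverse function theorem at $(x,0)$ and shrink to a product $U\times W$. One small slip is that the parameter-dependent field $(w,g)\mapsto(0,\vec\xi_w(g))$ lives on $F\times\tgt^{-1}(U_\psi)$, not on $F\times\src^{-1}(U_\psi)$, which is the domain you yourself use at the end.
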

\begin{proof}
For $v\in F$, put $\xi_v(y):=\psi_y^{-1}(v)$ and let $X_v$ be its
right-invariant source-vertical extension on $\tgt^{-1}(U_\psi)$.
Smooth dependence of Banach flows gives
$\Theta(y,v):=\Fl_1^{X_v}(1_y)$ near $(x,0)$.
Since $X_0=0$ and $X_{tv}=tX_v$, we have
$\Theta(y,0)=1_y$, $\src\circ\Theta=\pr_1$, and
$d_2\Theta(y,0)v=\psi_y^{-1}(v)$. Thus
$T_{(x,0)}\Theta(u,v)=T_x\one(u)+\psi_x^{-1}(v)$,
a Banach-space isomorphism. The inverse function theorem, followed
by shrinking $U$ and $W$, proves the assertion.
\end{proof}

\begin{lemma}\label{lemma:compatible-source-charts}
Let $(\G_i\rightrightarrows M_i,\F_{ij},f_{ij})$ be an inverse
system of Banach-Lie groupoids, put $A_i:=\Lie(\G_i)$ and
$\Phi_{ij}:=\Lie(\F_{ij})$, and fix $(x_i)\in\vpjl M_i$.
Suppose there are compatible local trivializations $\psi_i\colon A_i|_{U_i'}\to U_i'\times F_i$ about
$x_i$, with fiber connecting maps $\nu_{ij}$. Suppose that the
normalized source charts of Lemma~\ref{lemma:normalized-source-chart}
are defined on $U_i\times W_i$, where $U_i\subseteq U_i'$,
$f_{ij}(U_j)\subseteq U_i$, $\nu_{ij}(W_j)\subseteq W_i$, and
$\vpjl W_i$ is open in $\vpjl F_i$. Then their flow constructions
satisfy $\F_{ij}\circ\Theta_j=\Theta_i\circ(f_{ij}\times\nu_{ij})$.
If there are compatible base charts
$\phi_i\colon U_i\to M_{\phi_i}$ with
$\vpjl\phi_i(U_i)$ open, then
$\chi_i:=(\phi_i\times\id_{F_i})\circ\Theta_i^{-1}$
form a \sis chart about $(1_{x_i})$, in which
$\phi_i\circ\src_i\circ\chi_i^{-1}(u,v)=u$.
\end{lemma}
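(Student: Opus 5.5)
The plan is to prove the compatibility identity first, by naturality of flows of right-invariant vector fields under groupoid morphisms, and then to check the four \sis chart conditions for $\{\chi_i\}$.

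\textbf{Compatibility of the flow charts.} Fix $i\leq j$ and $v\in F_j$. Put $\xi^j_v(y)=(\psi_j)_y^{-1}(v)$ and $\xi^i_{\nu_{ij}v}(z)=(\psi_i)_z^{-1}(\nu_{ij}v)$. Compatibility of the trivializations, $\psi_i\circ\Phi_{ij}=(f_{ij}\times\nu_{ij})\circ\psi_j$, gives
\[
\Phi_{ij}\bigl(\xi^j_v(y)\bigr)=\xi^i_{\nu_{ij}v}\bigl(f_{ij}(y)\bigr).
\]
Since $\Phi_{ij}=T\F_{ij}|_{A_j}$, I will show that the right-invariant extensions $X^j_v$ and $X^i_{\nu_{ij}v}$ are $\F_{ij}$-related on $\tgt_j^{-1}(U_j')$. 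Take $g\in\G_j$ with $\tgt_j(g)=y$. Then
\[
T\F_{ij}\bigl(X^j_v(g)\bigr)=T\F_{ij}\,TR_g\,\xi^j_v(y)=TR_{\F_{ij}(g)}\,T\F_{ij}\,\xi^j_v(y)=X^i_{\nu_{ij}v}\bigl(\F_{ij}(g)\bigr).
\]
The middle equality holds because $\F_{ij}\circ R_g=R_{\F_{ij}(g)}\circ\F_{ij}$ on $\src_j^{-1}(y)$. Here $\tgt_i(\F_{ij}(g))=f_{ij}(y)\in U_i'$, which uses $f_{ij}(U_j')\subseteq U_i'$. If the trivialization domains are not nested, I shrink them first; the nesting $f_{ij}(U_j)\subseteq U_i$ is what is needed.

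Related vector fields have related flows wherever the flows are defined. This gives
\[
\F_{ij}\bigl(\Fl^{X^j_v}_1(1_y)\bigr)=\Fl^{X^i_{\nu_{ij}v}}_1\bigl(1_{f_{ij}(y)}\bigr),
\]
so $\F_{ij}\circ\Theta_j=\Theta_i\circ(f_{ij}\times\nu_{ij})$ on $U_j\times W_j$. The main subtlety sits here. The flow on the $i$-side must exist up to time $1$, and its image must land in $\Omega_i$. Both follow from the hypothesis $\nu_{ij}(W_j)\subseteq W_i$ together with $f_{ij}(U_j)\subseteq U_i$. Indeed, $\Theta_i$ is defined at $(f_{ij}(y),\nu_{ij}v)$, and uniqueness of integral curves identifies the image under $\F_{ij}$ of the $j$-side integral curve with the $i$-side one. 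Consequently $\F_{ij}(\Omega_j)\subseteq\Omega_i$.

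\textbf{The \sis chart conditions.} The model spaces $M_{\phi_i}\times F_i$ form an inverse system with connecting maps $\epsilon_{ij}\times\nu_{ij}$, which gives condition~1. Condition~2, $\F_{ij}(\Omega_j)\subseteq\Omega_i$, was shown above. For condition~3, combine the identity just proved with $\epsilon_{ij}\circ\phi_j=\phi_i\circ f_{ij}$:
\[
(\epsilon_{ij}\times\nu_{ij})\circ\chi_j=(\phi_i\times\id)\circ(f_{ij}\times\nu_{ij})\circ\Theta_j^{-1}=\chi_i\circ\F_{ij}\quad\text{on }\Omega_j.
\]
For condition~4, note that $\chi_i(\Omega_i)=\phi_i(U_i)\times W_i$. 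Hence
\[
\vpjl\chi_i(\Omega_i)=\vpjl\phi_i(U_i)\times\vpjl W_i,
\]
which is open in $\vpjl M_{\phi_i}\times\vpjl F_i$, and this product carries the inverse-limit topology. The chart is centred at $(1_{x_i})$ because $\Theta_i(x_i,0)=1_{x_i}$. Finally, $\src_i\circ\Theta_i=\pr_1$ gives
\[
\phi_i\circ\src_i\circ\chi_i^{-1}(u,v)=\phi_i\bigl(\phi_i^{-1}(u)\bigr)=u.
\]
Everything beyond the flow-relatedness step is bookkeeping.
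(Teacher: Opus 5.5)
Your proposal is correct and follows essentially the same route as the paper: compatibility of the trivializations makes the right-invariant extensions $\F_{ij}$-related, uniqueness of integral curves gives $\F_{ij}\circ\Theta_j=\Theta_i\circ(f_{ij}\times\nu_{ij})$, and the \sis chart conditions, the open limit image $\vpjl\phi_i(U_i)\times\vpjl W_i$, and the source formula are then bookkeeping. You simply supply more detail than the paper; note that no shrinking is needed, since $f_{ij}(U_j')\subseteq U_i'$ is already implied by $\psi_i\circ\Phi_{ij}=(f_{ij}\times\nu_{ij})\circ\psi_j$.
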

\begin{proof}
For $v\in W_j$, compatibility gives
$\Phi_{ij}\circ\xi_v^j=\xi_{\nu_{ij}(v)}^i\circ f_{ij}$.
The corresponding right-invariant fields are $\F_{ij}$-related.
Uniqueness of flows gives the asserted identity at time $1$.
Consequently $\chi_i\circ\F_{ij}=(\epsij\times\nu_{ij})\circ\chi_j$,
and their inverse-limit image is
$(\vpjl\phi_i(U_i))\times\vpjl W_i$, which is open.
The source formula follows from $\src_i\circ\Theta_i=\pr_1$.
\end{proof}

We use compatible local source sections to translate the unit charts
to arbitrary arrows. These sections need not be bisections.

\begin{theorem}\label{thm:integration-inverse-system}
Let $(\G_i\rightrightarrows M_i,\F_{ij},f_{ij})$ be an inverse
system of Banach-Lie groupoids, and put $A_i:=\Lie(\G_i)$ and
$\Phi_{ij}:=\Lie(\F_{ij})$. Suppose every
$x\in\vpjl M_i$ admits data as in
Lemma~\ref{lemma:compatible-source-charts}, and the resulting base
\sis charts form a strict atlas $\mathcal A_M$.
For every $(g_i)\in\vpjl\G_i$, put $x_i:=\src_i(g_i)$ and
$y_i:=\tgt_i(g_i)$. Suppose there are a base chart family
$\{\phi_i\colon V_i\to M_{\phi_i}\}\in\mathcal A_M$ about $x$,
compatible normalized source charts
$\Theta_i\colon U_i\times W_i\to\Omega_i$ about $1_{y_i}$, and
compatible local source sections $\sigma_i\colon V_i\to\G_i$
satisfying $\src_i\circ\sigma_i=\id_{V_i}$,
$\sigma_i(x_i)=g_i$, $\tgt_i(\sigma_i(V_i))\subseteq U_i$, and
$\F_{ij}\circ\sigma_j=\sigma_i\circ f_{ij}$.
Put $\Theta_i^{g_i}(z,v):=
\Theta_i(\tgt_i(\sigma_i(z)),v)\sigma_i(z)$ and
$\Omega_i^{g_i}:=\Theta_i^{g_i}(V_i\times W_i)$.
Assume that the inverse-limit overlap images of these chart families
are open and that, for their atlas topology,
$\vpjl\inv_i$ and $\vpjl\mul_i$ are continuous.

Then $(\G_i\rightrightarrows M_i,\F_{ij},f_{ij})$ is a strict inverse
system of Lie groupoids, $(A_i\to M_i,\Phi_{ij},f_{ij})$ is a strict
inverse system of Lie algebroids, and
$\Lie(\vpjl\G_i)\cong\vpjl A_i$ canonically. In particular,
$\vpjl\G_i\rightrightarrows\vpjl M_i$ integrates $\vpjl A_i$.
\end{theorem}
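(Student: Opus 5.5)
The plan is to verify the three conditions in the definition of a strict inverse system of Lie groupoids. Then Theorem~\ref{thm:inv-lim-gpd-structure} and Theorem~\ref{thm:inv-lie-groupoid} give the remaining conclusions. For each $(g_i)\in\vpjl\G_i$ I will use the translated charts
\[
\chi_i^{g_i}:=(\phi_i\times\id_{F_i})\circ(\Theta_i^{g_i})^{-1}\colon\Omega_i^{g_i}\to M_{\phi_i}\times F_i .
\]
The first step is to check that each $\Theta_i^{g_i}$ is a diffeomorphism onto an open set. Right translation by the local section $\sigma_i(z)$, followed by $\Theta_i$ over $\tgt_i(\sigma_i(z))$, is a smooth map with $\src_i\circ\Theta_i^{g_i}=\pr_1$. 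Its inverse is
\[
h\mapsto\bigl(\src_i(h),\;\pr_2\Theta_i^{-1}\bigl(h\,\sigma_i(\src_i(h))^{-1}\bigr)\bigr),
\]
defined on the open set where $h\,\sigma_i(\src_i h)^{-1}\in\Omega_i$. This set is open since $\Omega_i$ is open and the map $h\mapsto h\,\sigma_i(\src_i h)^{-1}$ is smooth. Hence $\chi_i^{g_i}$ is a chart of $\G_i$ about $g_i$.

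Compatibility with the connecting maps comes from Lemma~\ref{lemma:compatible-source-charts} together with $\F_{ij}\circ\sigma_j=\sigma_i\circ f_{ij}$ and the fact that $\F_{ij}$ is a groupoid morphism. Together these give $\F_{ij}\circ\Theta_j^{g_j}=\Theta_i^{g_i}\circ(f_{ij}\times\nu_{ij})$. The limit image is $\vpjl\phi_i(V_i)\times\vpjl W_i$, which is open. So each family is an \sis chart. By hypothesis the overlap images are open, so these families form a strict atlas $\mathcal A_\G$ on $\{\G_i\}$.

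In these charts $\phi_i\circ\src_i\circ(\chi_i^{g_i})^{-1}(u,v)=u$. The base charts belong to $\mathcal A_M$ and the model spaces are $M_{\phi_i}\times F_i$, where $\{M_{\phi_i}\}$ is complemented via the trivial projections. Hence $\{\src_i\}$ is a split-compatible family of submersions. Continuity of $\vpjl\src_i$ for the atlas topologies is immediate, because in charts it is $\pr_1$. Proposition~\ref{prop:submersions} then gives the split strict submanifold structure on $\{\mset{\G_i}\}$.

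The hypotheses give continuity of $\vpjl\inv_i$ and $\vpjl\mul_i$ for the atlas topology. Here I must make sure that the topology on $\vpjl\mset{\G_i}$ used in the hypothesis is the one induced by the \sis submanifold charts of Proposition~\ref{prop:submersions}. I expect this bookkeeping about topologies to be the only delicate point. Once it is settled, $\{\mul_i\}$ and $\{\inv_i\}$ are morphisms of strict inverse systems, because each level map is smooth and the limit is continuous. The system is therefore strict.

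Finally, Theorem~\ref{thm:inv-lim-gpd-structure} gives the Lie groupoid $\vpjl\G_i\rightrightarrows\vpjl M_i$. Theorem~\ref{thm:inv-lie-groupoid} shows that $\{\Lie(\G_i)\}=\{A_i\}$, with connecting maps $\Lie(\F_{ij})=\Phi_{ij}$, is a strict inverse system of Lie algebroids, and it supplies the canonical isomorphism $\Lie(\vpjl\G_i)\cong\vpjl A_i$. The \sis trivializations used there are, up to the canonical identification, the given $\{\psi_i\}$, via $d_2\Theta_i(y,0)=\psi_{i,y}^{-1}$. This means $\vpjl\G_i$ integrates $\vpjl A_i$.
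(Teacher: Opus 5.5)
Your proposal is correct and follows the paper's proof essentially step for step. It uses the translated charts $\chi_i^{g_i}$ with inverse built from $k\mapsto k\,\sigma_i(\src_i(k))^{-1}$, the compatibility $\F_{ij}\circ\Theta_j^{g_j}=\Theta_i^{g_i}\circ(f_{ij}\times\nu_{ij})$, the open limit image $\vpjl\phi_i(V_i)\times\vpjl W_i$, the overlap hypothesis for strictness, split-compatibility of $\{\src_i\}$ from its projection form, and the continuity hypotheses for $\{\mul_i\}$ and $\{\inv_i\}$, and then concludes with Theorems~\ref{thm:inv-lim-gpd-structure} and~\ref{thm:inv-lie-groupoid}.

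The topological point you flag is not a genuine obstacle. The submanifold topology on $\vpjl\mset{\G_i}$ is the subspace topology of the product of the atlas topologies, which is how the continuity hypothesis is meant, and the paper reads it the same way. Note one ordering issue: applying Proposition~\ref{prop:submersions} to $\mset{\G_i}$ needs $\{\tgt_i\}$ to be a morphism of strict inverse systems. This follows from $\vpjl\tgt_i=\vpjl\src_i\circ\vpjl\inv_i$ and the inversion hypothesis, so that hypothesis should be invoked before, not after, that step.
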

\begin{proof}
The compatible algebroid trivializations give a strict vector-bundle
atlas. Define $\rho_i(k):=k\sigma_i(\src_i(k))^{-1}$ on
$\src_i^{-1}(V_i)$. Then
$\Omega_i^{g_i}=\rho_i^{-1}(\Omega_i)$ is open and
$\Theta_i^{g_i}\colon V_i\times W_i\to\Omega_i^{g_i}$ is a
diffeomorphism, with inverse
$k\mapsto(\src_i(k),\pr_2\Theta_i^{-1}(\rho_i(k)))$.
Compatibility gives
$\F_{ij}\circ\Theta_j^{g_j}
=\Theta_i^{g_i}\circ(f_{ij}\times\nu_{ij})$.
Thus $\chi_i^{g_i}:=(\phi_i\times\id_{F_i})
\circ(\Theta_i^{g_i})^{-1}$ are compatible charts with open
inverse-limit image $(\vpjl\phi_i(V_i))\times\vpjl W_i$.
The overlap hypothesis makes them a strict atlas. The source maps
are split-compatible, since their local form is $(u,v)\mapsto u$.
The continuity assumptions make inversion and multiplication
morphisms of strict inverse systems. Theorems~\ref{thm:inv-lie-groupoid}
and~\ref{thm:invlimit-algebroid} now give the assertion.
\end{proof}

The strictness and continuity hypotheses of
Theorem~\ref{thm:integration-inverse-system} are automatic when the
translated chart domains are open in the ordinary inverse-limit
topology. In this case a strict base atlas need not be prescribed in
advance.

\begin{corollary}\label{cor:strong-integration}
Let $(\G_i\rightrightarrows M_i,\F_{ij},f_{ij})$
be an inverse system of Banach-Lie groupoids,
and put $A_i:=\Lie(\G_i)$ and
$\Phi_{ij}:=\Lie(\F_{ij})$.
Suppose every $x\in\vpjl M_i$ admits the data of
Lemma~\ref{lemma:compatible-source-charts}, including compatible base
charts $\phi_i$ and normalized source charts $\Theta_i$. Suppose
further that every $(g_i)\in\vpjl\G_i$ admits compatible source
sections and translated charts $\Theta_i^{g_i}$ as in
Theorem~\ref{thm:integration-inverse-system}, using these base charts,
with $\vpjl\Omega_i^{g_i}$ open in the ordinary inverse-limit
topology. At the units choose $\sigma_i=\one_i|_{U_i}$ and the
prescribed normalized charts. Then the levelwise integrations form a
strong inverse system of Lie groupoids, and
$\Lie(\vpjl\G_i)\cong\vpjl A_i$ canonically. If $I$ has a countable
cofinal subset, the limit is a Fr\'echet-Lie groupoid integrating the
Fr\'echet-Lie algebroid $\vpjl A_i$.
\end{corollary}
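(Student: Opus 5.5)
The plan is to reduce Corollary~\ref{cor:strong-integration} to Theorem~\ref{thm:integration-inverse-system}. The key point is that once every chart domain is open in the ordinary inverse-limit topology, the topological hypotheses of that theorem (open overlap images and continuity of $\vpjl\inv_i$, $\vpjl\mul_i$) hold automatically. The proof has three steps.

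\textbf{Step 1: the base.} First produce the strict base atlas. At each $x\in\vpjl M_i$, Lemma~\ref{lemma:compatible-source-charts} gives compatible base charts $\phi_i$ with $\vpjl\phi_i(U_i)$ open. I would show that $\vpjl U_i$ is open in the inverse-limit topology of $\vpjl M_i$, so the collection $\mathcal A_M$ of these families is a strong atlas. The route is through the units: the corollary takes $\sigma_i=\one_i|_{U_i}$, so $\Omega_i^{1_{x_i}}=\Omega_i$. By hypothesis $\vpjl\Omega_i$ is open in $\vpjl\G_i$ with the inverse-limit topology. The map $\vpjl\one_i\colon\vpjl M_i\to\vpjl\G_i$ is continuous for the inverse-limit topologies, being the limit of continuous compatible maps. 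Moreover $\vpjl U_i=(\vpjl\one_i)^{-1}(\vpjl\Omega_i)$, because $\src_i\circ\Theta_i=\pr_1$ gives $\one_i(y)\in\Omega_i$ exactly when $y\in U_i$. Hence $\vpjl U_i$ is open.

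Overlaps are then handled as follows. Intersections $\vpjl(U_i\cap\wt U_i)=\vpjl U_i\cap\vpjl\wt U_i$ are open in the inverse-limit topology. The limit chart $\vpjl\phi_i$ is a homeomorphism from $\vpjl U_i$ with the limit topology onto its image, since its inverse is $\vpjl\phi_i^{-1}$, a limit of continuous maps. So the overlap images are open, and $\mathcal A_M$ is a strong atlas.

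\textbf{Step 2: the arrows.} Apply the same argument to the translated arrow charts $\chi_i^{g_i}$. Their limit domains $\vpjl\Omega_i^{g_i}$ are open by hypothesis. Each $\vpjl\chi_i^{g_i}$ is a homeomorphism onto the open set $(\vpjl\phi_i(V_i))\times\vpjl W_i$ for the subspace inverse-limit topology. Therefore overlap images are open and the atlas topology on $\vpjl\G_i$ coincides with the inverse-limit topology, so the system is strong.

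Continuity of $\vpjl\inv_i$ and $\vpjl\mul_i$ then follows from the universal property of inverse-limit topologies. For $\vpjl\mset{\G_i}$ one uses that it carries the subspace topology of $\vpjl(\G_i\times\G_i)$, which is strong by the earlier remark on submanifolds of strong systems.

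\textbf{Step 3: conclusion.} All hypotheses of Theorem~\ref{thm:integration-inverse-system} are now verified. It yields strictness, which by Steps~1 and~2 is in fact strongness, together with $\Lie(\vpjl\G_i)\cong\vpjl A_i$. For the Fr\'echet statement, replace $I$ by a countable cofinal subset, which does not change the limits. Then every limit model space $\vpjl M_{\phi_i}$, $\vpjl F_i$ is a countable inverse limit of Banach spaces, hence Fr\'echet.

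The main obstacle is Step~1. The hypothesis only gives openness at the level of arrows, and it has to be transferred to the base through the unit section. This requires care that $\vpjl\one_i$ is continuous into the inverse-limit topology and that $\Omega_i\cap\one_i(M_i)=\one_i(U_i)$ at every level.
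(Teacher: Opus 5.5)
Your proposal is correct and follows the paper's route. The paper justifies the corollary only by the preceding remark: once the translated domains are open in the ordinary inverse-limit topology, the strictness and continuity hypotheses of Theorem~\ref{thm:integration-inverse-system} become automatic and no strict base atlas needs to be prescribed. Your identity $\vpjl U_i=(\vpjl\one_i)^{-1}(\vpjl\Omega_i)$, which is where the choice $\sigma_i=\one_i|_{U_i}$ at the units enters, is a clean way to make that remark about the base atlas precise.
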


For an inverse sequence, a simple sufficient condition is eventual
stabilization: for each $g=(g_n)\in\vpjl\G_n$, suppose some $n_0$
satisfies $\Omega_m^{g_m}=\F_{n_0m}^{-1}(\Omega_{n_0}^{g_{n_0}})$
for all $m\geq n_0$. Compatibility then gives
$\vpjl\Omega_n^{g_n}=\pr_{n_0}^{-1}(\Omega_{n_0}^{g_{n_0}})$,
which is open in the inverse limit topology. Thus the preceding corollary recovers
the stabilization criterion without separate overlap or continuity
assumptions.

We next examine source-simply connectedness. Unlike strictness,
it depends on the topology of the source fibers and does not follow
from the levelwise integrations.

\begin{proposition}\label{prop:source-fiber-homotopy}
Let $(\G_n\rightrightarrows M_n,\F_{nm},f_{nm})$ be an inverse
sequence whose limit is a Lie groupoid with componentwise structural
maps. Fix $x=(x_n)\in\vpjl M_n$ and put $X_n:=\G_{n,x_n}$,
based at $1_{x_n}$. Suppose the canonical bijection
$(\vpjl\G_n)_x\cong\vpjl X_n$ is a homeomorphism for the
inverse limit topology on the right, and that the restricted
connecting maps $X_{n+1}\to X_n$ are locally trivial bundles.
Then there is a natural exact sequence
\[
1\longrightarrow\vpjl\nolimits^1\pi_2(X_n,1_{x_n})
\longrightarrow\pi_1((\vpjl\G_n)_x,1_x)
\longrightarrow\vpjl\pi_1(X_n,1_{x_n})
\longrightarrow1.
\]
If each $X_n$ is simply connected, then $(\vpjl\G_n)_x$ is
path connected. The homeomorphism hypothesis is automatic if the arrow
system is strong.
\end{proposition}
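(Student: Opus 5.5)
The plan is to reduce the statement to the classical Milnor-type exact sequence for homotopy groups of an inverse limit of a tower of fibrations. First, by the homeomorphism hypothesis, I will replace $(\vpjl\G_n)_x$ by the topological inverse limit $X_\infty:=\vpjl X_n$, based at $(1_{x_n})_n=1_x$. Since the structural maps of the limit groupoid are componentwise, the source fiber of $\vpjl\G_n$ over $x$ is, as a set, exactly the set of compatible sequences $(g_n)$ with $\src_n(g_n)=x_n$. The canonical bijection therefore identifies base points, so it induces isomorphisms on all homotopy groups and sets. If the arrow system is strong, the atlas topology on $\vpjl\G_n$ is the inverse-limit topology. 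The source fiber is then a subspace of $\vpjl\G_n\subseteq\prod\G_n$, and its subspace topology coincides with the inverse-limit topology of the $X_n\subseteq\G_n$. This proves the final sentence of the statement.

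Next, I will use the local triviality hypothesis: each restricted connecting map $p_n\colon X_{n+1}\to X_n$ is a locally trivial bundle. The $X_n$ are Hausdorff source fibers of Banach-Lie groupoids, hence metrizable Banach manifolds and in particular paracompact. Under these conditions a locally trivial bundle is a Hurewicz fibration by Hurewicz's uniformization theorem; at the very least it is a Serre fibration, which is all that is needed. Then $\{X_n,p_n\}$ is a tower of fibrations, and Milnor's $\vpjl^1$ sequence (as in Bousfield--Kan) applies to the pointed tower:
\[
1\to\vpjl\nolimits^1\pi_{k+1}(X_n)\to\pi_k(X_\infty)\to\vpjl\pi_k(X_n)\to 1 .
\]
This holds for $k\geq1$, and for $k=0$ it gives surjectivity onto $\vpjl\pi_0$ with fibers governed by $\vpjl^1\pi_1$. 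Taking $k=1$ gives the displayed sequence, and naturality is inherited from the naturality of the Milnor sequence in maps of towers. To make this step transparent, I will sketch it via the homotopy limit. Replace the tower by its telescope (mapping microscope), so that $X_\infty$ is weakly equivalent to $\mathrm{holim}\,X_n$ because the $p_n$ are fibrations. Then there is a fibration $\mathrm{holim}\,X_n\to\prod X_n\to\prod X_n$ built from $(z_n)\mapsto(z_n,p_n(z_{n+1}))$ and paths. Its long exact sequence shows that the kernel and cokernel of $\mathrm{id}-\text{shift}$ on $\prod\pi_k$ are $\vpjl\pi_k$ and $\vpjl^1\pi_k$.

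For the path-connectedness claim, I will use the $k=0$ part. If each $X_n$ is simply connected, then $\vpjl\pi_0(X_n)$ is a point and $\vpjl^1\pi_1(X_n)$ is trivial. By the $\pi_0$ exactness, $\pi_0(X_\infty)$ is then a single point, so the fiber is path connected. Concretely, any point $(g_n)$ is joined to $1_x$ by inductively lifting paths. One chooses a path in $X_0$ from $g_0$ to $1_{x_0}$ and lifts it through $p_0$ starting at $g_1$. The lift ends in the fiber $p_0^{-1}(1_{x_0})$, and one corrects it there inside $X_1$ using simple connectivity together with the homotopy lifting property. Concatenating over shrinking time intervals $[1-2^{-n},1-2^{-n-1}]$ then gives a path in the limit.

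The main obstacle is justifying that the restricted connecting maps are genuine fibrations, together with the identification of the strict limit with the topological limit. In particular, the naive inverse limit of fibrations computes the homotopy limit only when the maps are fibrations, and the infinite concatenation requires the adjustments at later stages to converge. I will handle this with the paracompactness and uniformization argument above, citing the standard Milnor/Bousfield--Kan theorem rather than reproving it. The homeomorphism hypothesis is what prevents the atlas-topology phenomena seen for the solenoid from breaking the argument.
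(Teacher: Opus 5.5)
Your proposal is correct and follows essentially the same route as the paper: transfer to the topological limit $\vpjl X_n$ via the homeomorphism hypothesis, apply the Milnor $\vpjl^1$ sequence for a tower of fibrations (the paper cites Hirschhorn's Theorem~2.1) in degrees one and zero, and handle the strong case through the subspace topology inherited from $\vpjl\G_n$. One small correction: Hausdorff Banach manifolds need not be metrizable or paracompact, and the paper assumes neither (nor even Banach models in this statement), so drop the Hurewicz-uniformization remark and rely on your Serre-fibration fallback, which is valid because every locally trivial bundle is a Serre fibration without any hypothesis on the base.
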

\begin{proof}

The restricted connecting maps are locally
trivial bundles and preserve the base points.
Thus \cite[Theorem~2.1]{Hir15}, applied in
degree one, gives
\[
1\longrightarrow\vpjl\nolimits^1\pi_2(X_n)
\longrightarrow\pi_1(\vpjl X_n)
\longrightarrow\vpjl\pi_1(X_n)
\longrightarrow1.
\]
The asserted sequence follows from the
homeomorphism hypothesis. If every $X_n$
is simply connected, the degree-zero part
of the same theorem shows that $\vpjl X_n$
is path connected. Finally, if the arrow
system is strong, both topologies on the
limit source fiber coincide with the
subspace topology inherited from
$\vpjl\G_n$.
\end{proof}

\begin{corollary}\label{cor:ssc-limit}
Under the hypotheses of Theorem~\ref{thm:integration-inverse-system},
suppose each $\G_n$ is source-simply connected and the source
fibers satisfy the additional hypotheses of
Proposition~\ref{prop:source-fiber-homotopy}. Then every limit source
fiber is path connected and
$\pi_1((\vpjl\G_n)_x,1_x)
\cong\vpjl\nolimits^1\pi_2(\G_{n,x_n},1_{x_n})$
canonically. Hence $\vpjl\G_n$ is source-simply connected if and
only if $\vpjl^1\pi_2(\G_{n,x_n},1_{x_n})=0$ for every
$(x_n)\in\vpjl M_n$.
\end{corollary}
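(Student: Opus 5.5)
The plan is to apply Proposition~\ref{prop:source-fiber-homotopy} at every base point and then read off the two conditions in the definition of source-simple connectedness. First, Theorem~\ref{thm:integration-inverse-system} shows that the levelwise system is a strict inverse system of Lie groupoids. By Theorem~\ref{thm:inv-lim-gpd-structure}, the limit $\vpjl\G_n\rightrightarrows\vpjl M_n$ is then a Lie groupoid whose structure maps are the componentwise limits $\vpjl\src_n$, $\vpjl\mul_n$, and so on. This is exactly the standing hypothesis of Proposition~\ref{prop:source-fiber-homotopy}.

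Fix $x=(x_n)\in\vpjl M_n$ and put $X_n:=\G_{n,x_n}$. The additional hypotheses assumed in the corollary are precisely the homeomorphism $(\vpjl\G_n)_x\cong\vpjl X_n$ and the local triviality of the restricted connecting maps. Proposition~\ref{prop:source-fiber-homotopy} therefore gives the natural exact sequence
\[
1\to\vpjl\nolimits^1\pi_2(X_n,1_{x_n})\to\pi_1((\vpjl\G_n)_x,1_x)\to\vpjl\pi_1(X_n,1_{x_n})\to1.
\]
Since each $\G_n$ is source-simply connected, every $X_n$ is simply connected. Hence each $\pi_1(X_n,1_{x_n})$ is trivial, and so is their inverse limit. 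The same proposition also yields that $(\vpjl\G_n)_x$ is path connected. Exactness then makes the first map an isomorphism $\vpjl^1\pi_2(X_n)\cong\pi_1((\vpjl\G_n)_x,1_x)$. This isomorphism is canonical because the sequence is natural.

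For the equivalence, recall that "simply connected" was defined as path connected with trivial fundamental group. Path connectedness of every limit source fiber is already established, so source-simple connectedness of $\vpjl\G_n$ holds if and only if $\pi_1((\vpjl\G_n)_x,1_x)$ is trivial for every $x$. By the isomorphism above, this happens if and only if $\vpjl^1\pi_2(\G_{n,x_n},1_{x_n})=0$ for all $x$.

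There is one point to check: $\pi_1$ is independent of the base point within a path-connected fiber, so computing at $1_x$ suffices. The only genuine subtlety is the identification of source fibers together with their topologies. Here the manifold topology from the atlas is the one that matters, and that is exactly what the homeomorphism hypothesis supplies, so no further obstacle is expected.
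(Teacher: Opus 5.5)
Your proposal is correct and follows the same route as the paper, whose proof is a one-line appeal to Proposition~\ref{prop:source-fiber-homotopy} using that the level source fibers $\G_{n,x_n}$ are simply connected. You also spell out steps the paper leaves implicit: Theorems~\ref{thm:integration-inverse-system} and~\ref{thm:inv-lim-gpd-structure} give the limit its componentwise Lie groupoid structure, and $\vpjl\pi_1(\G_{n,x_n},1_{x_n})=1$ reduces the exact sequence to the asserted isomorphism.
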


\begin{proof}

Since every $\G_n$ is source-simply
connected, Proposition~\ref{prop:source-fiber-homotopy}
gives both path connectedness and the
asserted fundamental group.
\end{proof}

\begin{corollary}\label{cor:source-fiber-ML}
Under the hypotheses of Corollary~\ref{cor:ssc-limit}, suppose that
$\{\pi_2(\G_{n,x_n},1_{x_n})\}_n$ satisfies the
Mittag-Leffler condition for every $(x_n)\in\vpjl M_n$.
Then $\vpjl\G_n$ is source-simply connected. It suffices that
the restricted connecting maps induce surjections on $\pi_2$; in
particular, this holds if the fibers of the restricted connecting maps
are simply connected.
\end{corollary}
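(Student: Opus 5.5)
The argument reduces to Corollary~\ref{cor:ssc-limit}. By that corollary, every limit source fiber is path connected and $\pi_1((\vpjl\G_n)_x,1_x)\cong\vpjl^1\pi_2(\G_{n,x_n},1_{x_n})$. It therefore suffices to show that $\vpjl^1$ vanishes for every $(x_n)\in\vpjl M_n$.

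For this we invoke the standard fact about inverse sequences of groups, not necessarily abelian, although $\pi_2$ is abelian anyway: if a sequence $(G_n,\alpha_{nm})$ satisfies the Mittag-Leffler condition, then $\vpjl^1 G_n$ is trivial. Here $\vpjl^1$ is understood in the sense used by \cite[Theorem~2.1]{Hir15}, namely as the orbit set of the action of $\prod G_n$ on itself given by $(h_n)\cdot(g_n)=(h_n g_n\alpha_{n,n+1}(h_{n+1})^{-1})$.

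If the proof is to be self-contained at this point, I would sketch this fact as follows.
\begin{enumerate}
\item First reduce to the case of surjective connecting maps. Replace $G_n$ by the stable image $G_n'=\alpha_{nm}(G_m)$ for $m$ large, which exists by Mittag-Leffler.
\item Show that the quotient system $G_n/G_n'$ has trivial $\vpjl^1$. Each element of it is eventually killed by the connecting maps.
\item For the surjective system $\{G_n'\}$, solve the equation $g_n=h_n\,\alpha(h_{n+1})^{-1}$ recursively, choosing preimages $h_{n+1}$ at each step.
\end{enumerate}
Since the Mittag-Leffler hypothesis is imposed at every $(x_n)$, $\vpjl^1$ vanishes at every base point. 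Together with path connectedness, this makes every source fiber simply connected, i.e.\ $\vpjl\G_n$ is source-simply connected.

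For the sufficient conditions, suppose the restricted connecting maps $X_{n+1}\to X_n$ induce surjections on $\pi_2$. Then all the maps $\pi_2(X_m)\to\pi_2(X_n)$ are surjective, since they are composites of surjections, so the images stabilize immediately and Mittag-Leffler holds trivially. Finally, suppose the fibers $Y$ of these maps are simply connected. Each map is a locally trivial bundle by the hypotheses of Proposition~\ref{prop:source-fiber-homotopy}, hence a Serre fibration, and its long exact homotopy sequence contains the segment
\[
\pi_2(X_{n+1})\to\pi_2(X_n)\to\pi_1(Y).
\]
Since $\pi_1(Y)=1$, exactness gives surjectivity on $\pi_2$.

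The main obstacle is subtle rather than hard: the fibers must be simply connected in the sense fixed in the paper, which includes path connectedness. That is exactly what is needed for $\pi_1(Y)$ to be trivial at every base point of $Y$, so that exactness at $\pi_2(X_n)$ applies. Everything else is formal.
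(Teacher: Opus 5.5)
Your proposal is correct and follows essentially the paper's proof. Corollary~\ref{cor:ssc-limit} reduces the claim to showing $\vpjl^1\pi_2(\G_{n,x_n},1_{x_n})=0$ for every $(x_n)$, which the Mittag-Leffler condition provides via the standard Bousfield--Kan result the paper cites, and the sufficient conditions follow from the homotopy exact sequence of the locally trivial restricted connecting maps. Your sketch of the Mittag-Leffler vanishing and your remark that levelwise $\pi_2$-surjections make the condition automatic fill in details that the paper cites or leaves implicit.
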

\begin{proof}
The groups $\pi_2(X_n,1_{x_n})$ are abelian, therefore the
Mittag-Leffler condition gives $\vpjl^1\pi_2(X_n)=0$ by
\cite[Corollary~IX.3.5]{BK72}. The last assertion follows from
the homotopy exact sequence of a locally trivial bundle.
\end{proof}

In particular, under the hypotheses of
Corollary~\ref{cor:strong-integration}, if each $\G_n$ is
source-simply connected, the restricted
source-fiber connecting maps are locally
trivial bundles, and the inverse sequences
of second homotopy groups satisfy the
Mittag--Leffler condition, then
$\vpjl A_n$ admits a source-simply
connected Fr\'echet-Lie groupoid
integration.

\section{Examples}\label{sec:examples}
The following examples illustrate strict and strong inverse limits and
applications of the integration results. Example~\ref{ex:jetgpd} treats
jet groupoids, while Example~\ref{ex:dyadic} shows that strictness
need not imply strongness. Example~\ref{ex:current-groupoids}
concerns current groupoids. Example~\ref{ex:gauge-groupoids}
constructs inverse-limit gauge groupoids integrating Atiyah algebroids
and applies the source-fiber homotopy criterion. Finally,
Example~\ref{ex:hopf-gauge} gives a source-simply connected
Fr\'echet integration whose Atiyah anchor admits no Lie algebroid
splitting.

\begin{example}[Infinite jet groupoids]\label{ex:jetgpd}
    Let $\G\rightrightarrows M$ be a finite-dimensional 
    Lie groupoid,
    with $\dim M=n$ and $\dim\G-n=d$. Put $J^0\G:=\G$, and for
    $k\geq1$ let $J^k\G$ be the Lie groupoid of $k$-jets of local
    bisections of $\G$; see, for example,
    \cite{Mei17}. The truncations
    $\pi_l^k(j_x^k\sigma):=j_x^l\sigma$, $l\leq k$, are Lie groupoid
    morphisms.

    Fix $(z_k)_k\in\vpjl J^k\G$. For each $k$, write
$z_k=j_x^k\sigma_k$ for a local bisection $\sigma_k$. Put
$g:=z_0=\sigma_k(x)$. Choose a chart $\phi$ about $x$ and a
source-adapted chart $\theta=(\phi\circ\src,\theta_2)$ about $g$.
A local section $\tau$ of $\src$ is represented by
$\theta\circ\tau\circ\phi^{-1}(y)=(y,h_\tau(y))$.
The induced $k$-jet chart records the base point, the value of
$h_\tau$, and its derivatives up to order $k$.

    Put $E_r:=\R^d\otimes S^r(\R^n)^*$. For $k\geq1$, the bisection
    condition is open and depends only on the first jet, so the charts
    may be chosen with image
    $\mathcal O\times\prod_{r=2}^kE_r$, where
    $\mathcal O\subseteq\R^n\times\R^d\times E_1$ is open.
    Truncation is coordinate projection, and the inverse-limit chart
    image is $\mathcal O\times\prod_{r=2}^\infty E_r$, which is open in
    the product topology. The same description applies to chart
    overlaps. Hence the compatible jet charts form a strong atlas.
    In these charts the source maps are coordinate projections, so
    they are split-compatible. Therefore
    $\{J^k\G\rightrightarrows M\}_k$ is a strong inverse sequence of
    Lie groupoids.

    Put $J^\infty\G:=\vpjl_kJ^k\G$. Then $J^\infty\G$ is a
    Fr\'echet-Lie groupoid. The standard identifications
    $\Lie(J^k\G)\cong J^k\Lie(\G)$ are compatible with truncation, and
    Theorem~\ref{thm:inv-lie-groupoid} gives
    $\Lie(J^\infty\G)\cong J^\infty\Lie(\G)$.
\end{example}

\begin{example}[Action groupoids]
Let $(M_i,g_{ij})$ be a strict inverse system of manifolds and
$(G_i,h_{ij})$ a strict inverse system of Lie groups. Suppose that
smooth right actions $\alpha_i\colon M_i\times G_i\to M_i$
form a morphism of strict inverse systems of manifolds. Then the
action groupoids $M_i\rtimes G_i\rightrightarrows M_i$ form a strict
inverse system of Lie groupoids with connecting morphisms
$g_{ij}\times h_{ij}$. Indeed, the source maps are projections
and hence split-compatible, while multiplication and inversion
are morphisms of strict inverse systems. Consequently,
$\vpjl(M_i\rtimes G_i)\cong(\vpjl M_i)\rtimes(\vpjl G_i)$
canonically as Lie groupoids, with limit action $\vpjl\alpha_i$.
\end{example}

\begin{example}[A nonstrong action groupoid]\label{ex:dyadic}
Let $(H_n,p_{nm})$ be the inverse sequence $H_n=S^1$ with
$p_{nm}(a)=a^{2^{m-n}}$ whenever $n\leq m$, and equip
$\vpjl H_n$ with the strict, nonstrong Lie group atlas
of Section~\ref{sec:mfds_bundles}. Put $M_n:=S^1$, with
identity connecting maps, and define smooth right actions
$\alpha_n\colon M_n\times H_n\to M_n$ by
$\alpha_n(z,a)=za^{2^n}$. These actions are compatible,
since $\alpha_n(z,p_{nm}(a))=za^{2^m}=\alpha_m(z,a)$.
Thus the action groupoids
$\G_n:=M_n\rtimes H_n\rightrightarrows M_n$
form a strict inverse sequence, and
$\vpjl\G_n\cong S^1\rtimes(\vpjl H_n)\rightrightarrows S^1$,
with limit action $\alpha(z,(a_n))=za_1^2$.
This action is transitive, since for any $w\in S^1$ one can
choose $t\in\R$ with $e^{it}=w$ and put $a_n=e^{it/2^n}$.
The arrow atlas is not strong: its topology restricts to the
strict, nonstrong topology of $\vpjl H_n$ on every source
fiber. These source fibers are diffeomorphic to $\vpjl H_n$
and hence disconnected. Thus strictness implies neither
strongness nor source-connectedness, even over a connected
finite-dimensional base.
\end{example}

\begin{example}[Current groupoids]\label{ex:current-groupoids}
    Let $K$ be a compact smooth manifold and let $l\in\N$.
    Let $(\G_i\rightrightarrows M_i,\F_{ij},f_{ij})$ be a strong
    inverse system of Lie groupoids with Hausdorff arrow manifolds
    and Banach base manifolds. Put $\G:=\vpjl\G_i$ and $M:=\vpjl M_i$.
    Suppose that the arrow and base systems admit compatible local
    additions $\Sigma_i^\G$ and $\Sigma_i^M$ whose inverse limits are
    local additions on $\G$ and $M$, respectively. Assume moreover that
    these local additions are adapted to the source maps in the sense
    that $\src_i\circ\Sigma_i^\G=\Sigma_i^M\circ T\src_i$ wherever
    both sides are defined.

    We briefly indicate why the mapping spaces again form strong
    inverse systems. Let $\{N_i\}$ denote either the arrow or the base
    system and put $N:=\vpjl N_i$. For a compatible family
    $(h_i)\in\vpjl C^l(K,N_i)$, the map $h\colon K\to N$,
    $h(x):=(h_i(x))_i$, is $C^l$. The standard local-addition charts
    about the $h_i$ are compatible with the connecting maps. Their
    model spaces are $\Gamma_{C^l}(h_i^*TN_i)$, and a finite compatible
    trivializing cover of $K$ identifies
    $\Gamma_{C^l}(h^*TN)$ with
    $\vpjl\Gamma_{C^l}(h_i^*TN_i)$ as locally convex spaces.
    Moreover, compactness of $K$ and strongness of $\{N_i\}$ imply,
    by the usual finite-cover argument, that the inverse-limit chart
    domains are open in the inverse-limit topology. Thus the canonical
    map $C^l(K,N)\to\vpjl C^l(K,N_i)$,
    $h\mapsto(\pr_i\circ h)_i$, is a smooth diffeomorphism, and the
    mapping-space chart families form a strong atlas.

    We also record the split-compatibility argument that will be used
    for the source maps.

    \medskip
    \noindent\Emph{Lemma.}
    Let $(N_i,a_{ij})$ and $(L_i,b_{ij})$ be strong inverse systems of
    manifolds and let $\{q_i\colon N_i\to L_i\}$ be a split-compatible
    family of submersions. Suppose that the two systems admit
    compatible local additions $\Sigma_i^N$ and $\Sigma_i^L$ adapted
    to the $q_i$. Then the family
    $\{C^l(K,q_i)\colon C^l(K,N_i)\to C^l(K,L_i)\}$ is
    split-compatible with respect to the induced mapping-space charts.

    \medskip
    \noindent\emph{Proof.}
    Fix a compatible family $(h_i)\in\vpjl C^l(K,N_i)$ and put
    $k_i:=q_i\circ h_i$. Let $\kappa_i^N$ and $\kappa_i^L$ be the
    local-addition charts about $h_i$ and $k_i$, respectively. Since
    the local additions are adapted to $q_i$, these charts represent
    $C^l(K,q_i)$ by the continuous linear map
    $\Lambda_i\colon\Gamma_{C^l}(h_i^*TN_i)\to
    \Gamma_{C^l}(k_i^*TL_i)$ given by
    $\Lambda_i(X)(x):=T_{h_i(x)}q_i(X(x))$.

    For every $x\in K$, split-compatibility of $\{q_i\}$ gives
    compatible charts about $(h_i(x))_i$ in which $q_i$ is the
    projection $(u,c)\mapsto u$. After restricting to a neighborhood
    $W_x$ of $x$, these charts therefore induce compatible smooth
    bundle maps
    $\rho_i^x\colon k_i^*TL_i|_{W_x}\to h_i^*TN_i|_{W_x}$
    satisfying $Tq_i\circ\rho_i^x=\id$. Choose a finite cover
    $\{W_a\}_{a=1}^m$ of this form and  denote the
    corresponding bundle maps by $\rho_i^a$. Choose a smooth partition of
    unity $\{\chi_a\}_{a=1}^m$ subordinate to it. The bundle maps
    $\rho_i:=\sum_a\chi_a\rho_i^a$, with each summand extended by zero
    outside $W_a$, are compatible and satisfy
    $Tq_i\circ\rho_i=\id$.

    Hence the maps
    $R_i\colon\Gamma_{C^l}(k_i^*TL_i)\to
    \Gamma_{C^l}(h_i^*TN_i)$, $R_i(Y):=\rho_i\circ Y$, form a
    compatible family of continuous linear right inverses of
    $\Lambda_i$. Therefore
    $P_i:=\id-R_i\Lambda_i$ is a compatible family of continuous
    projections onto $\ker\Lambda_i$. In particular,
    $\{\ker\Lambda_i\}$ is a complemented inverse subsystem.
    The compatible linear isomorphisms
    $X\mapsto(\Lambda_iX,P_iX)$ identify
    $\Gamma_{C^l}(h_i^*TN_i)$ with
    $\Gamma_{C^l}(k_i^*TL_i)\times\ker\Lambda_i$, with inverse
    $(Y,Z)\mapsto R_iY+Z$. Composing the source mapping-space charts
    with these isomorphisms represents $C^l(K,q_i)$ by the projection
    onto the first factor. Thus $\{C^l(K,q_i)\}$ is split-compatible.
    \hfill$\square$

    \medskip

    Applying the lemma to $q_i=\src_i$ shows that the maps
    $C^l(K,\src_i)\colon C^l(K,\G_i)\to C^l(K,M_i)$ form a
    split-compatible family. By~\cite[Theorem~3.2]{AGS20},
    $C^l(K,\G_i)\rightrightarrows C^l(K,M_i)$ is a Lie groupoid with
    pointwise structural maps. Hence the current groupoids form a
    strong inverse system whose inverse limit is canonically
    $C^l(K,\G)\rightrightarrows C^l(K,M)$. Moreover,
    \cite[Theorem~4.6]{AGS20} gives
    $\Lie(C^l(K,\G_i))\cong C^l(K,\Lie(\G_i))$. Therefore
    Theorem~\ref{thm:inv-lie-groupoid} gives
    \[
        \Lie(C^l(K,\G))
        \cong
        \vpjl C^l(K,\Lie(\G_i)).
    \]

    In particular, let $\G\rightrightarrows M$ be finite dimensional.
We can choose local additions $\Sigma^\G$ and $\Sigma^M$ adapted to
$\src$, so that $\src\circ\Sigma^\G=\Sigma^M\circ T\src$ wherever defined; see \cite[Proposition 4.2]{KS25}.
    Consider the regularity inverse sequence
    $\{C^r(K,\G)\rightrightarrows C^r(K,M)\}_{r\in\N}$, with the
    canonical inclusions as connecting maps. At a smooth map, the
    standard local-addition chart domains for $C^s$ are inverse images
    of the corresponding $C^r$ chart domains whenever $r\leq s$.
    Hence the arrow and base mapping spaces form strong inverse
    sequences. 
    These local additions induce adapted local-addition charts at every regularity, so the preceding lemma shows that the current source maps form a split-compatible family.
     Therefore the current
    groupoids form a strong inverse sequence with inverse limit
    $C^\infty(K,\G)\rightrightarrows C^\infty(K,M)$, and
    \[
        \Lie(C^\infty(K,\G))
        \cong
        \vpjl_{r\in\N}\Lie(C^r(K,\G))
        \cong
        C^\infty(K,\Lie(\G)).
    \]
\end{example}

\begin{example}[Gauge groupoids and Atiyah algebroids]
\label{ex:gauge-groupoids}
    Let
    $(\pi_i\colon P_i\to M_i,G_i,r_{ij},f_{ij},h_{ij})$
    be a strict inverse system of principal bundles with strict
    principal atlas $\mathcal T$. Set
    $P:=\vpjl P_i$, $M:=\vpjl M_i$, and $G:=\vpjl G_i$.
    By Proposition~\ref{prop:strict-principal-bundles},
    $\pi:=\vpjl\pi_i\colon P\to M$ is a principal $G$-bundle.

    We first recall the construction of the gauge groupoid at each level; see~\cite{Mac05}. Put
    $Q_i:=(P_i\times P_i)/G_i$, where $G_i$ acts diagonally, write
    $\varpi_i\colon P_i\times P_i\to Q_i$ for the orbit map, and set
    $\src_i[p,q]:=\pi_i(q)$ and $\tgt_i[p,q]:=\pi_i(p)$.
    For arbitrary principal trivializations
    $\tau\colon\pi_i^{-1}(U)\to U\times G_i$ and
    $\lambda\colon\pi_i^{-1}(V)\to V\times G_i$, write
    $\wt\tau:=\pr_2\circ\tau$ and
    $\wt\lambda:=\pr_2\circ\lambda$, and define
    \[
        \Theta_{\tau,\lambda}[p,q]
        :=
        \bigl(\pi_i(p),\pi_i(q),
        \wt\tau(p)\wt\lambda(q)^{-1}\bigr).
    \]
    This is a bijection from
    $(\tgt_i,\src_i)^{-1}(U\times V)$ onto
    $U\times V\times G_i$, with inverse
    $(x,y,g)\mapsto
    [\tau^{-1}(x,g),\lambda^{-1}(y,e_i)]$.
    If $\tau',\lambda'$ are another pair of principal trivializations,
    then the overlap is
    $(x,y,g)\mapsto
        \bigl(x,y,
 c^{\tau'\tau}(x)g
        c^{\lambda'\lambda}(y)^{-1}\bigr).$
    Hence these ``gauge" charts define a smooth manifold structure on $Q_i$.

    In product principal coordinates the quotient map $\varpi_i$ is
    represented by
    $(x,a,y,b)\mapsto(x,y,ab^{-1})$. The coordinate change
    $(x,a,y,b)\mapsto(x,y,ab^{-1},b)$, with inverse
    $(x,y,g,b)\mapsto(x,gb,y,b)$, therefore represents $\varpi_i$ by
    projection onto the first three factors. Thus $\varpi_i$ is a
    surjective submersion.
    If $\src_i[p,q]=\tgt_i[p',q']$, there is a unique $g\in G_i$
    such that $p'=q\cdot g$; define
    $\mul_i([p,q],[p',q']):=[p\cdot g,q']$.
    This is independent of the representatives: replacing
    $(p,q)$ and $(p',q')$ by
    $(p\cdot a,q\cdot a)$ and $(p'\cdot b,q'\cdot b)$ replaces
    $g$ by $a^{-1}gb$ and the resulting pair by the diagonal action
    of $b$. Define also
    $\one_i(x):=[p,p]$, for $p\in\pi_i^{-1}(x)$, and
    $\inv_i[p,q]:=[q,p]$.

    Using gauge charts
    $\Theta_{\tau,\lambda}$,
    $\Theta_{\lambda,\mu}$, and
    $\Theta_{\tau,\mu}$ for the first arrow, second arrow, and product,
    respectively, one has
    $\mul_i((x,y,g),(y,z,h))=(x,z,gh)$.
    Likewise, using
    $\Theta_{\tau,\lambda}$ and $\Theta_{\lambda,\tau}$,
    $\inv_i(x,y,g)=(y,x,g^{-1})$, while
    $\src_i(x,y,g)=y$, $\tgt_i(x,y,g)=x$, and in
    $\Theta_{\tau,\tau}$ one has $\one_i(x)=(x,x,e_i)$.
    Hence $Q_i\rightrightarrows M_i$ is a Lie groupoid.

    For $i\leq j$, define
    $q_{ij}\colon Q_j\to Q_i$ by
    $q_{ij}[p,q]:=[r_{ij}(p),r_{ij}(q)]$.
    Equivariance of $r_{ij}$ makes this well defined, and the
    inverse-system identities for the $r_{ij}$ imply those for the
    $q_{ij}$. Moreover,
    $q_{ij}\circ\varpi_j=\varpi_i\circ(r_{ij}\times r_{ij}).$
    This also shows that $q_{ij}$ is smooth. Indeed, on a gauge-chart
    domain the map
    $(x,y,g)\mapsto
    (\tau^{-1}(x,g),\lambda^{-1}(y,e_j))$
    is a smooth local section of $\varpi_j$, and composing this
    section with the preceding identity gives a local smooth
    expression for $q_{ij}$. The maps $q_{ij}$ are groupoid
    morphisms: if $p'=q\cdot g$, then
    $r_{ij}(p')=r_{ij}(q)\cdot h_{ij}(g)$, which gives compatibility
    with multiplication; the other structural maps are immediate.

    We now use the strict principal atlas. Fix
    $(z_i)\in\vpjl Q_i$ and put
    $x_i:=\tgt_i(z_i)$ and $y_i:=\src_i(z_i)$. Choose
    $\{\tau_i\},\{\lambda_i\}\in\mathcal T$ about $(x_i)$ and
    $(y_i)$, respectively. In the corresponding gauge charts,
    $\Theta_{\tau_i,\lambda_i}\circ q_{ij}\circ
        \Theta_{\tau_j,\lambda_j}^{-1}=
 f_{ij}\times f_{ij}\times h_{ij}.$
    Thus, if
    $\Theta_{\tau_i,\lambda_i}(z_i)=(x_i,y_i,g_i)$, then
    $h_{ij}(g_j)=g_i$. Combining the gauge charts with \sis charts
    on the two base factors and on the group factor gives compatible
    charts on $\{Q_i\}$.

    For two such gauge-chart families the inverse-limit transition map
    has the form
    $ (x,y,g)\mapsto
        \bigl(x,y,
        c^{\tau'\tau
        }(x)g
        c^{\lambda'\lambda}(y)^{-1}\bigr).$  
     By the defining property of $\mathcal T$, the compatible limit transition functions are continuous; hence they are morphisms of strict inverse systems and are smooth locally by Lemma~\ref{lemma:basic-C^k}. Smoothness of the group operations then shows that the gauge transition map is smooth.
     Its continuity also shows
    that the inverse-limit overlap domains are open. Hence these
    gauge chart families form a strict atlas on $\{Q_i\}$.

    In these charts the source maps are projections, so
    $\{\src_i\}$ is split-compatible. Using matched \sis gauge
    charts as above, multiplication is represented by
    $((x,y,g),(y,z,h))\mapsto(x,z,gh)$, and inversion by
    $(x,y,g)\mapsto(y,x,g^{-1})$. Thus multiplication and inversion
    are morphisms of strict inverse systems. Hence
    $\{Q_i\rightrightarrows M_i\}$ is a strict inverse system of Lie
    groupoids.

    Applying the preceding levelwise construction to the principal
    bundle $P\to M$, form the gauge groupoid
    $(P\times P)/G\rightrightarrows M$. There is a canonical map
    $\Gamma\colon(P\times P)/G\to\vpjl Q_i$ given by
    $\Gamma([(p_i)_i,(q_i)_i])=([p_i,q_i])_i$.
    Since inverse limits do not in general commute with quotients, we
    verify directly that $\Gamma$ is bijective.

    Suppose first that two compatible pairs $(p_i,q_i)$ and
    $(p_i',q_i')$ have the same image in every $Q_i$. There is a
    unique $g_i\in G_i$ such that
    $(p_i',q_i')=(p_i\cdot g_i,q_i\cdot g_i)$. Compatibility and
    freeness give $g_i=h_{ij}(g_j)$, so $(g_i)\in G$ and the two
    pairs determine the same element of $(P\times P)/G$.

    Conversely, let $(z_i)\in\vpjl Q_i$ and put
    $x_i:=\tgt_i(z_i)$ and $y_i:=\src_i(z_i)$. Choose
    $\{\tau_i\},\{\lambda_i\}\in\mathcal T$ about $(x_i)$ and
    $(y_i)$ and write
    $\Theta_{\tau_i,\lambda_i}(z_i)=(x_i,y_i,g_i)$.
    Then $h_{ij}(g_j)=g_i$. Hence
    $p_i:=\tau_i^{-1}(x_i,g_i)$ and
    $q_i:=\lambda_i^{-1}(y_i,e_i)$ form compatible families and
    $[p_i,q_i]=z_i$. Thus $\Gamma$ is surjective. In the corresponding
    limit gauge charts $\Gamma$ is represented by the identity.
    Therefore
    \[
        (\vpjl P_i\times \vpjl P_i)/(\vpjl G_i)\cong\vpjl Q_i
    \]
    canonically as Lie groupoids.

    We finally identify the corresponding Lie algebroids directly.
    Put $A_i:=TP_i/G_i$, where $G_i$ acts on $TP_i$ by the
    differential of the principal action, and put
    $\mathfrak g_i:=\Lie(G_i)$. If
    $\omega_i^R\colon TG_i\to\mathfrak g_i$ denotes the right
    Maurer-Cartan map, a principal trivialization $\tau$ identifies
    $A_i|_U$  with
    $TU\oplus(U\times\mathfrak g_i)$ (as sets) by 
       $ [v_p]\mapsto
        \bigl(T\pi_i(v_p),
        \omega_i^R(d\wt\tau(v_p))\bigr).$
        Define
    $\alpha_i\colon A_i\to\Lie(Q_i)$ by
    $\alpha_i([v_p]):=
    d_{(p,p)}\varpi_i(v_p,0)$.
    This is well defined since
    $\varpi_i\circ(R_g\times R_g)=\varpi_i$. In the gauge chart
    $\Theta_{\tau,\tau}$ it identifies $(v,\xi)$ with the
    source-vertical tangent vector $(v,0,\xi)$ at the unit. Hence
    $\alpha_i$ is a smooth vector bundle isomorphism.

    Transport the Lie algebroid structure of $\Lie(Q_i)$ through
    $\alpha_i$ to $A_i$. We identify this structure explicitly. Its
    anchor is
    $\anch_i([v_p])=d\pi_i(v_p)$. Moreover, a section of $A_i$
    corresponds uniquely to a $G_i$-invariant vector field $X$ on
    $P_i$, and under $\alpha_i$ the corresponding right-invariant
    source-vertical vector field on $Q_i$ is
    $\widehat X([p,q]):= d_{(p,q)}\varpi_i(X(p),0).$
        Since $(X,0)$ and $(Y,0)$ are $\varpi_i$-related to
    $\widehat X$ and $\widehat Y$, respectively,
    $[\widehat X,\widehat Y]=\widehat{[X,Y]}$. Thus the transported
    bracket on $A_i$ is precisely the bracket induced by
    $G_i$-invariant vector fields on $P_i$. Consequently
    $\alpha_i\colon TP_i/G_i\to\Lie(Q_i)$
    is a canonical Lie algebroid isomorphism.

    Define
    $\overline r_{ij}\colon TP_j/G_j\to TP_i/G_i$ by
    $\overline r_{ij}[v_p]:=[dr_{ij}(v_p)]$. This is well defined by
    equivariance of $r_{ij}$. Differentiating
    $q_{ij}\circ\varpi_j
    =\varpi_i\circ(r_{ij}\times r_{ij})$ gives
    $\alpha_i\circ\overline r_{ij} = \Lie(q_{ij})\circ\alpha_j.$
    Hence the maps $\overline r_{ij}$ are smooth Lie algebroid
    morphisms. Using the natural family $\{\alpha_i\}$, the strict
    inverse-system structure on $\{\Lie(Q_i)\}$ given by
    Theorem~\ref{thm:inv-lie-groupoid} induces the corresponding
    strict inverse-system structure on $\{TP_i/G_i\}$.

    The same construction for the limit principal bundle gives
    $\Lie((P\times P)/G)\cong TP/G$. Therefore
    Theorem~\ref{thm:inv-lie-groupoid} and the preceding
    identifications give canonical Lie algebroid isomorphisms
    \[
        T(\vpjl P_i)/\vpjl G_i
        \cong
        \Lie((\vpjl P_i\times \vpjl P_i)/\vpjl G_i)
        \cong
        \vpjl\Lie(Q_i)
        \cong
        \vpjl(TP_i/G_i).
    \]

    Finally, if $\{G_i\}$ is strong, the transition condition in the
    definition of a strict principal atlas is automatic. If, in
    addition, $\{M_i\}$ is strong, then
    Proposition~\ref{prop:strict-principal-bundles} gives that
    $\{P_i\}$ is strong, and the gauge charts above show that
    $\{Q_i\}$ is strong as well.

    For inverse sequences, suppose in addition that the group and
    base systems are strong, every $P_n$ is simply connected, and
    each $r_{n,n+1}\colon P_{n+1}\to P_n$ is a locally trivial bundle.
    Fix $x=(x_n)\in\vpjl M_n$ and choose $q=(q_n)\in P_x$.
    The pointed diffeomorphisms $P_n\to Q_{n,x_n}$,
    $p\mapsto[p,q_n]$, intertwine the connecting maps.
    Since the gauge groupoid system is strong, the source-fiber
    topology agrees with the inverse limit topology.
    Proposition~\ref{prop:source-fiber-homotopy} therefore gives
    $\pi_1((\vpjl Q_n)_x,1_x)\cong
    \vpjl\nolimits^1\pi_2(P_n,q_n)$.
    In particular, the limit gauge groupoid is source-simply
    connected if the sequence $\{\pi_2(P_n,q_n)\}$ satisfies the
    Mittag-Leffler condition for every $x$. This holds when
    the fibers of the maps $r_{n,n+1}$ are simply connected.
\end{example}

\begin{example}[Hopf gauge groupoids and Atiyah algebroids]
\label{ex:hopf-gauge}
    Let $K:=SU(2)$ and let
    $K\to S^7\xrightarrow{\pi}S^4$ be the quaternionic Hopf
    principal bundle. For $n\in\N_0$, put
    $M_n:=S^4\times\R^n$, $P_n:=S^7\times\R^n$, and
    $\pi_n:=\pi\times\id_{\R^n}$. Let
    $f_{nm}\colon M_m\to M_n$ and
    $r_{nm}\colon P_m\to P_n$ be the canonical projections.
    Put $\G_n:=(P_n\times P_n)/K\rightrightarrows M_n$ and
    $\F_{nm}[p,q]:=[r_{nm}(p),r_{nm}(q)]$.
    Writing $\G_0:=(S^7\times S^7)/K\rightrightarrows S^4$,
    we have canonical groupoid isomorphisms
    $\G_n\cong\G_0\times(\R^n\times\R^n
    \rightrightarrows\R^n)$, where the second factor is the
    pair groupoid. The connecting maps are the identity on
    $\G_0$ and the coordinate projections on the pair groupoids.
    If $\Theta_0\colon U_0\times W_0\to\Omega_0$ is a
    normalized source chart on $\G_0$, its product with the global
    source chart of the pair groupoid is
    $\Theta_n((y,a),(v,b))=(\Theta_0(y,v),(a+b,a))$.
    The corresponding charts at arbitrary arrows have domains
    $\Omega_n^{g_n}=\Omega_0^{g_0}\times\R^n\times\R^n
    =\F_{0n}^{-1}(\Omega_0^{g_0})$.
    Thus Corollary~\ref{cor:strong-integration} applies: the
    groupoids form a strong inverse sequence integrating the
    inverse sequence of Atiyah algebroids.

    Put $P:=S^7\times\R^\N$ and $M:=S^4\times\R^\N$. By
    Example~\ref{ex:gauge-groupoids} and
    Theorem~\ref{thm:inv-lie-groupoid}, there are canonical
    isomorphisms
    \[
        \vpjl\G_n\cong(P\times P)/K
        \qquad \text{and} \qquad
        \Lie(\vpjl\G_n)\cong TP/K\cong\vpjl(TP_n/K).
    \]
    Write $A:=TP/K\to M$ and $A_n:=TP_n/K\to M_n$.

    Fix $x=(x_n)\in M$ and choose a compatible
    $q=(q_n)\in P_x$, so $r_{nm}(q_m)=q_n$.
    The diffeomorphisms $P_n\to\G_{n,x_n}$,
    $p\mapsto[p,q_n]$, intertwine the connecting maps. Thus the
    source-fiber inverse sequence is $S^7\times\R^n$ with the standard
    projections, whose fibers are Euclidean spaces, and its inverse
    limit is $S^7\times\R^\N$. Since the groupoid system is strong,
    the canonical bijection from the limit source fiber to the inverse
    limit of the level source fibers is a homeomorphism. Hence the
    hypotheses of Proposition~\ref{prop:source-fiber-homotopy} hold.
    Moreover, $\pi_1(S^7\times\R^n)=0$ and
    $\pi_2(S^7\times\R^n)=0$, so
    $\vpjl{}^1\pi_2(S^7\times\R^n)=0$.
    Corollary~\ref{cor:ssc-limit} therefore shows that the
    inverse-limit gauge groupoid is source-simply connected.

    Let $\xi\to S^4$ be the complex rank-two bundle associated to the
    defining representation of $SU(2)$. Its second Chern number is
    nonzero~\cite[Lemma~20.9]{MS74}, so the Hopf bundle admits no
    flat principal connection. A vector bundle morphism
    $s_0\colon TS^4\to A_0$ satisfying
    $\anch_0\circ s_0=\id_{TS^4}$ is a principal connection, and it is
    a Lie algebroid morphism precisely when the connection is flat;
    see~\cite[\S5.2,\S5.3]{Mac05}. Thus no such $s_0$ can be a Lie
    algebroid morphism.

    Let $\rho\colon A\to A_0$ be the Lie algebroid morphism induced by
    the principal-bundle projection
    $S^7\times\R^\N\to S^7$, and let
    $\iota\colon S^4\to M$ be given by $\iota(x)=(x,0)$.
    If there were a Lie algebroid morphism
    $s\colon TM\to A$ satisfying $\anch\circ s=\id_{TM}$, then
    $\rho\circ s\circ T\iota\colon TS^4\to A_0$ would be a Lie
    algebroid morphism satisfying
    $\anch_0\circ\rho\circ s\circ T\iota=\id_{TS^4}$, contradicting
    the preceding paragraph. Thus the Atiyah anchor of $A$ is
    surjective but admits no Lie algebroid splitting.
\end{example}

\end{document}